\newcommand{\vertiii}[1]{{\left\vert\kern-0.25ex\left\vert\kern-0.25ex\left\vert #1
\right\vert\kern-0.25ex\right\vert\kern-0.25ex\right\vert}}
\newtheorem{theorem}{Theorem}
\newtheorem{lema}{Lemma}
\newtheorem{assump}{Assumption}
\theoremstyle{definition}
\newtheorem{condition}{Condition}
\newtheorem{remark}{Remark}
\newtheorem{definition}{Definition}
\newtheorem{example}{Example}
\begin{document}

\begin{frontmatter}
\title{Confidence intervals for high-dimensional inverse covariance estimation}
\runtitle{Confidence intervals for high-dimensional inverse covariance
estimation}

\begin{aug}
%
\author{\fnms{Jana} \snm{Jankov\'a}\corref{}\ead[label=e1]{jankova@stat.math.ethz.ch}}
\and
\author{\fnms{Sara} \snm{van de Geer}\ead[label=e2]{geer@stat.math.ethz.ch}}

\address{Seminar for Statistics\\
ETH Z\"urich\\
\printead{e1,e2}}
\end{aug}

\runauthor{J. Jankov\'a and S. van de Geer}

\begin{abstract}
We propose methodology for statistical inference for low-dimen\-sional parameters of sparse precision matrices in a high-dimen\-sional setting.
Our method leads to a non-sparse estimator of the precision matrix
whose entries have a Gaussian limiting distribution. 
Asymptotic properties of the novel estimator are analyzed  for the case of sub-Gaussian observations under a sparsity assumption on the entries of the true precision matrix and regularity conditions.
Thresholding the de-sparsified estimator gives guarantees for edge selection in the associated graphical model.
 Performance of the proposed method is illustrated in a simulation study.
\end{abstract}

\begin{keyword}[class=MSC]
\kwd[Primary ]{62J07}
\kwd[; secondary ]{62F12}
\end{keyword}

\begin{keyword}
\kwd{Confidence intervals}
\kwd{graphical Lasso}
\kwd{high-di\-mensional}
\kwd{precision matrix}
\kwd{sparsity}
\end{keyword}
%
%
\received{\smonth{3} \syear{2014}}

\end{frontmatter}
\maketitle

\section{Introduction}
A large number of methods has been proposed for the problem of
inverse covariance estimation in high-dimensional settings, where the
number of
parameters may be much larger than the sample size.
Common procedures in literature typically take advantage of
thresholding which leads to estimators whose
asymptotic distribution largely depends on the underlying unknown
parameter \cite{knight2000} and
is in general not tractable, which makes it challenging to establish
any results for statistical inference.
In this paper, motivated by the semi-parametric approach adopted in
\cite{vdgeer13} and \cite{zhang},
we propose an asymptotically normal non-sparse estimator of the
precision matrix which leads to confidence regions and testing for
low-dimensional parameters.

The problem of estimating the inverse covariance matrix in high
dimensions naturally arises in a wide variety of application domains,
such as graphical modeling of brain connectivity based on FMRI brain
analysis \cite{fmri}, gene regulatory network discovery \cite{genes},
financial data processing, social network analysis and climate data analysis.
The development of methodology for high-dimensional inference is of
interest for instance in differential networks, which comprise two
sample comparisons of high-dimensional graphical models where the goal
is to test equality of networks corresponding to two different
populations. Differential networks find application e.g. in cancer studies
\cite{mukherjee}.

Consider an i.i.d. sample $X_1,\dots,X_n \in\mathbb R^p$ of size $n$
from a zero-mean distribution with unknown covariance matrix $\Sigma
^*\in\mathbb R^{p\times p}.$
Denoting the inverse covariance matrix, often referred to as the
precision or concentration matrix, as $\Theta^* = (\Sigma^*)^{-1},$
the goal is to the estimate $\Theta^*$ in a setting where $p\gg n.$
The most natural candidate for an estimator of the covariance
matrix is presumably the sample covariance matrix. However, when $p>n$,
the sample covariance matrix is singular with probability one.
Even when $p/n$ tends to a constant, the covariance matrix exhibits
poor performance \cite{johnstone2}.

Different structural assumptions have been imposed on the model to
allow for consistent estimation in the regime $p\gg n$, here we
consider in particular sparsity assumptions on the number of non-zero
elements of the precision matrix.
Let $\mathcal V:=\{1,\dots,p\}$, let $S\equiv S(\Theta^*):= \{(i,j)\in
\mathcal V\times\mathcal V:\Theta^*_{ij}\not= 0\}$
be the set of all non-zero entries of $\Theta^*$ and denote the
cardinality of $S$ by $s.$
Use $S^c(\Theta^*)$ for the complement of $S(\Theta^*)$ in $\mathcal
V\times\mathcal V.$
We shall impose a sparsity assumption on the maximum row cardinality of
$\Theta^*$; therefore define $d= d_n$ as follows
\[
d := \max_{i\in\{1,\dots,p\}} |\{j\in\mathcal V: \Theta^*_{ij}\not=0\}|.
\]
Estimation of precision matrices is of interest in Gaussian graphical
modeling where the entries of the precision matrix
represent conditional dependences between the variables \cite{lauritzen}.
Suppose that $X =(X^1,\dots,$ $X^p) \sim\mathcal N(0,\Sigma^*)$
and associate the variables $X^1,\dots,X^p$ with the vertex set
$\mathcal V=\{1,\dots,p\}$ of an undirected graph $\mathcal G=(\mathcal
V,\mathcal E)$ with an edge set $\mathcal E.$
A pair $(i,j)$ is included in the edge set if and only if the variables
$X^i$ and $X^j$
are not independent given all remaining variables. Under $X\sim
\mathcal N(0,\Sigma^*)$, a pair of variables is conditionally
independent given all remaining variables if and only if the
corresponding entry in the precision matrix $\Theta^*=(\Sigma^*)^{-1}$
is zero. Hence a pair of variables is contained in the edge set if and
only if the corresponding entry in the inverse covariance matrix $\Theta
^*=(\Sigma^*)^{-1}$ is non-zero (\cite{lauritzen}). Elements of the
precision matrix may thus be interpreted as the edge weights in the
Gaussian graphical model.
The parameter $d$ corresponds to the maximum node degree in the
associated Gaussian graphical model and thus sparsity assumptions on
$d$ translate to sparsity of the edges in the graphical model.

\subsection{Overview of related work}
Existing work on statistical inference in high dimensional settings
has mostly focused on inference for parameters in linear models and
generalized linear models \cite
{vdgeer13,zhang,meinshausen,stanford1,bootstrap1,bootstrap2,vdgeer12}.
In particular we mention the paper \cite{zhang} where a semi-parametric
projection approach was proposed for testing and construction of
confidence intervals for low-dimensional parameters.
The proposed method is based on the Lasso estimator for which
the\vadjust{\goodbreak}
Karush-Kuhn-Tucker conditions are ``inverted'' to obtain a
de-sparsified estimator. The approach leads to asymptotically normal
and efficient (in a semi-parametric sense) estimation of the regression
coefficients and an extension of the method to generalized linear
models is given in \cite{vdgeer13}.
The key assumption which allows for asymptotically normal estimation
requires sparsity of order $\sqrt{n}/\log p$ in the high-dimensional
parameter vector and the method relies on $\ell_1$ norm error bound of
the Lasso.
The paper \cite{stanford1} essentially follows the same approach as
\cite{zhang} but uses a different approach to find an approximate
inverse for the sample covariance matrix.

Further methodology for inference for the regression coefficients in
high-dimen\-sional regression includes methods based on sample
splitting \cite{pvals,wasserman}, bootstrapping approach \cite
{bootstrap1,bootstrap2}, inference after variable selection \cite{buja}
and other \cite{meinshausen,belloni1}.

Estimation of precision matrices is a problem closely related to linear regression
and in high dimensions has been extensively studied in terms of point
estimation. Less work has yet been done on inference for precision
matrices in this setting. We mention the work \cite{zhou}
which suggests a regression approach leading to an asymptotically
normal estimator for elements of the precision matrix, under row
sparsity of order $\sqrt{n}/\log p$, bounded spectrum of the true
precision matrix and Gaussianity of the underlying distribution.
The procedure regresses each pair of variables $(X^i,X^j)$ on all the
remaining variables for each $(i,j)\in\mathcal V\times\mathcal V$ to
obtain an estimate of the noise level of the conditional distribution
of $(X^i,X^j)$. This requires $\mathcal O(p^2)$ high-dimensional
regressions with the square-root Lasso (\cite{sqrtlasso}).

The large amount of work that has studied methodology for point
estimation of precision matrices (a selected list includes \cite{glasso,buhlmann,yuan2,cai,sunzhang,bickel2})
typically uses regularization in terms of $\ell_1$ norm or some sort of
thresholding of the sample covariance matrix.
Hence they do not immediately lead to results for inference, but we
show they may serve as good initial estimators to construct
asymptotically normal estimators.

Here we consider in particular the graphical Lasso, which minimizes the
negative Gaussian log-likelihood with regularization in terms of the
$\ell_1$ norm of the off-diagonal entries of the precision matrix and
has been studied in detail in several papers \cite{glasso,rothman,ravikumar} and \cite{yuan}.
The optimization problem corresponding to graphical Lasso is a convex
optimization problem that can be solved with coordinate descent methods
\cite{aspremont,glasso} in polynomial time.

{The asymptotic behaviour of the graphical Lasso has been studied in
\cite{rothman} (see also \cite{wainwright1}) which derives rates of
convergence in Frobenius norm of order $O((p+d)\log p /n)$ under mild
conditions on the eigenvalues of $\Theta^*$ and under sparsity
$(p+d)\log p /n\rightarrow0.$
High-dimensionality here is reflected in $p$ being allowed to grow as a
function of $n$, however, in limit, $p/n\rightarrow0$ is required.
The high-dimensional setting $p\gg n$ is considered in \cite
{ravikumar}, where convergence rates for the supremum norm of order
$O(\kappa_{\Gamma^*}\sqrt{\log p/n})$ are derived under an
irrepresentability condition on the true precision matrix $\Theta^*$,
sparsity $d^2\log p /n \rightarrow0$ and sub-Gaussian tails of the
underlying distribution. The rates depend on certain quantities $\kappa
_{\Gamma^*}$ and $\kappa_{\Sigma^*}$, where $\kappa_{\Gamma^*}$ is the
$\ell_1$ matrix norm of the inverse of a certain subset of the Hessian
matrix $\Gamma^* = \Sigma^*\otimes\Sigma^*$ and $\kappa_{\Sigma^*} $
the $\ell_1$ matrix norm of the true covariance matrix $\Sigma^*$. For
reader's convenience we discuss the results in more detail in Section
\ref{subsec:setup} and appendix \ref{subsec:tails}.

Further methodology on estimation of precision matrices in particular
includes the regression approach \cite{buhlmann,yuan2,cai}
and \cite{sunzhang} which uses a Lasso-type algorithm or Dantzig
selector \cite{candes2007} to estimate each column or a smaller part of
the precision matrix individually, thresholding of the sample
covariance matrix \cite{bickel2} or a combination thereof.

\subsection{Outline}
In this paper, we propose a de-sparsified estimator based on the
graphical Lasso
and study its theoretical properties for low-dimensional statistical
inference and edge selection in the associated graphical model.
The work closely follows the approach of \cite{vdgeer13}, which builds
on ``inverting'' the necessary Karush-Kuhn-Tucker conditions for an
optimization problem. The paper \cite{vdgeer13} demonstrates this
method for the case of linear regression and generalized linear models,
while we apply the idea to a fully nonlinear estimator. By inverting
the KKT conditions, we obtain a de-sparsified graphical Lasso estimator
and consequently we analyze its asymptotic properties. Asymptotic
normality of the new estimator is proved for sub-Gaussian observations,
under regularity conditions on the true precision matrix $\Theta^*$.
The estimator may be thresholded again to give guarantees for edge
selection in the associated graphical model. The performance of the
method is illustrated on both simulated and real data.

The paper is organized as follows. In Section \ref{subsec:setup}, we
briefly introduce the model.
Section \ref{sec:main} contains the main results. Section \ref{sec:emp}
illustrates the theoretical results in a simulation study and on a real
data set. Finally, Section \ref{sec:proofs} contains proofs.

\smallskip\noindent
\textit{Notation.} For two matrices $A$ and $B$, use $A\otimes B$ to
denote the Kronecker product of $A$ and $B.$
For a vector $x\in\mathbb R^d$ and $p\in(0,\infty]$ we use the
notation $\|x\|_p$ to denote the $p-$norm of $x$ in the classical
sense. For a matrix $A\in\mathbb R^{d\times d}$ we use the notations
$\vertiii{A}_\infty=\max_{i} \|e_i^T A\|_1$, $\vertiii{A}_1= \vertiii
{A^T}_\infty$ and $\|A\|_\infty=\max_{i,j}|A_{ij}|$.
The symbol $\text{vec}(A)$ denotes the vectorized version of a matrix
$A$ obtained by stacking rows of $A$ on each other.
By $e_i$ we denote a $p$-dimensional vector of zeros with one at
position $i$ and by $e_{ij}:=e_i\otimes e_j$ a $p^2$-dimensional vector
of zeros with one at position indexed by $(i,j).$

For sequences $f_n,g_n$, we write $f_n=O(g_n)$ if $|f_n| \leq C |g_n|$
for some $C>0$ independent of $n$ and all $n>C.$ Analogously, we write
$f_n=\Omega(g_n)$ if $|f_n| \geq C |g_n|$ for some $C>0$ independent of
$n$ and all $n>C.$ We write
$f_n\asymp g_n$ if both $f_n = \mathcal O(g_n)$ and $f_n =\Omega(g_n)$ hold.
Finally, $f_n=o(g_n)$ if $\lim_{n\rightarrow\infty} f_n/g_n =0.$

We use $S_{+}^p$ to denote the cone of positive semi-definite $p\times
p$ matrices, i.e.
$S_{+}^p:=\{A\in\mathbb R^{p\times p}| A=A^T, A \succeq0\}$ and
$S^{p}_{++}$ to denote the set of positive definite $p\times p$
matrices, $S_{++}^p:=\{A\in\mathbb R^{p\times p}| A=A^T, A \succ0\}$.

The components of a vector $X\in\mathbb R^p$ will be denoted by upper
indices, i.e. $X = (X^1,\dots,X^p)$. Elements of matrices will be
typically denoted by lower indices, e.g. $A_{ij}.$
We use $\rightsquigarrow$ to denote convergence in
distribution.\vadjust{\goodbreak}

\subsection{Model setup}\label{subsec:setup}

\begin{definition}
A real zero-mean random variable $X$ is sub-Gaussian if there exists
$K>0$ such that
%
\begin{equation}\label{defsg}
\mathbb Ee^{X^2/K^2} \leq2.
\end{equation}
\end{definition}

Condition \ref{subg} implies a bound on the moment generating function
$\mathbb E^{tX} \leq e^{\frac{3}{2}K^2t^2}$ for all $t>0$
and a tail bound $\mathbb P(|X|>t) \leq2e^{-\frac{t}{6K^2}}$ for all
$t>0,$ which are both equivalent characterizations of sub-Gaussianity.
A prime example of a sub-Gaussian random variable is a zero-mean
Gaussian random variable.

We shall consider the following sub-Gaussianity conditions for random
vectors $X=(X^1,\dots,X^p)$ with zero mean and covariance matrix $\Sigma^*$.
%
\begin{condition}[Sub-Gaussianity condition]\label{subg}
All normalized components
$X^i/\break \sqrt{\Sigma_{ii}^*},i=1,\dots,p$ of the zero-mean random vector
$X=(X^1,\dots,X^p)$ with covariance matrix $\Sigma^*$ are sub-Gaussian
random variables with a common parameter $K>0.$
\end{condition}

The condition \ref{subg} is weaker than requiring the sub-Gaussianity
of the whole vector $X=(X^1,\dots,X^p)$ in the following sense.
\begin{condition}[Sub-Gaussianity vector condition]\label{subgv}
A zero-mean random vector $X\in
\mathbb R^p$ satisfies the sub-Gaussianity vector condition if
there exists a constant $K>0$ such that
%
\begin{equation}
\sup_{\alpha\in\mathbb R^p:\|\alpha\|_2\leq1}\mathbb Ee^{|\alpha^T X
|^2/K^2} \leq2.
\end{equation}
\end{condition}

If a random vector $X=(X^1,\dots,X^p)$ satisfies \ref{subgv} with a
constant $K$, then each component $X^i$
satisfies \eqref{defsg} with $K$.

\smallskip
We now review some notation and results related to the graphical Lasso
estimator \cite{glasso} on which our further analysis is based.
Consider an i.i.d. sample $X_1,\dots,X_n$ distributed as $X$ with
$\mathbb EX=0,\text{cov}(X)=\Sigma^*.$
Let $\hat\Sigma= \frac{1}{n}\sum_{i=1}^n X_iX_i^T$ be the sample
covariance matrix. We further write $\hat\Sigma_{ij} := (\hat\Sigma
)_{ij}$ for the $(i,j)$-th element of $\hat\Sigma$, $(i,j)\in\mathcal
V\times\mathcal V$.
The graphical Lasso estimator $\hat\Theta$ \cite{glasso} is defined as
the solution to the optimization problem
%
\begin{equation}\label{alg}
\tag*{{(P1)}}
\hat\Theta:= \text{arg}\min_{\Theta\in S_{++}^p}\left\{ \text
{trace}(\Theta^T\hat\Sigma) - \log\text{det}(\Theta) + \lambda\|\Theta
\|_{1,\text{off}}\right\},
\end{equation}
where $\hat\Sigma$ is the sample covariance matrix $\hat\Sigma= \frac
{1}{n}\sum_{i=1}^n X_iX_i^T$ and $\|\cdot\|_{1,\text{off}}$ is the $\ell
_1$ off-diagonal penalty, $\|\Theta\|_{1,\text{off}}
= \sum_{i\not=j} |\Theta_{ij}|$.
When the data is normally distributed, \ref{alg} is equivalent to $\ell
_1-$penalized maximum likelihood for the precision matrix.

In our analysis, we rely on the results on rates of convergence derived
in \cite{ravikumar} for the graphical Lasso in supremum norm. The work
\cite{ravikumar} assumes an irrepresentability condition which is a
rather restrictive condition in the linear regression setting \cite
{vdgeer09}. However, other literature on the graphical Lasso \cite
{yuan} likewise assumes irrepresentability condition or otherwise
assumes $p/n\rightarrow0$ \cite{rothman}.
In the linear regression setting, irrepresentable conditions are
sufficient for variable selection \cite{vdgeer09,buhlmann,zhao}.

The analysis of rates of convergence of the graphical Lasso in \cite
{ravikumar} in addition considers certain functions of the true
precision matrix $\Theta^*$, which we now define.

Let $\kappa_{\Sigma^*}$ be the $\ell_\infty$ operator norm of the true
covariance matrix $\Sigma^*$, i.e.
\[
\kappa_{\Sigma^*} = \vertiii{\Sigma^*}_\infty= \max_i \sum_{j=1}^p
|\Sigma^*_{ij}|.
\]
The parameter $\kappa_{\Sigma^*}$ then measures the size of entries in
$\Sigma^*$.

\begin{example}\label{ex1}
Consider the T\"oplitz matrix $\Sigma^{*}_{ij} = \rho^{|i-j|}$ for
$i,j=1,\dots,p,$ where $|\rho|< 1.$ Then
$\kappa_{\Sigma^*}=({1-\rho^p})/{(1-\rho)}=\mathcal O(1)$ if $\rho$ is
bounded away from $1$.
\end{example}

We next consider the Hessian $\Gamma(\Theta)$ of the negative
log-likelihood function
$\ell(\Theta)=\text{tr}(\Theta^T\hat\Sigma)
-\log\text{det}(\Theta)$.
The entries of the gradient of $\ell$ are given by
(\cite{greene})
\[
\frac{\partial\ell(\Theta)}{\partial\Theta_{ij}} = \hat\Sigma
_{ij}-(\Theta^{-1})_{ij}.
\]
The Hessian matrix is then indexed by pairs of edges $((i,j),(k,l))$
and the $((i,j),(k,l))$-th entry takes the form
\begin{eqnarray*}
\frac{\partial^2 \ell(\Theta)}{\partial\Theta_{kl}\partial\Theta_{ij}}
=
\frac{\partial(\hat\Sigma_{ij}-(\Theta^{-1})_{ij} )}{\partial\Theta
_{kl}\partial\Theta_{ij }}
=e_i^T\Theta^{-1} e_k e_l^T \Theta^{-1}e_j =\Sigma_{ik}\Sigma_{lj},
\end{eqnarray*}
where $\Sigma=\Theta^{-1}.$
In matrix form, we obtain
\[
\Gamma(\Theta) = \Sigma\otimes\Sigma.
\]
By $(i,j)$-th column of $\Sigma\otimes\Sigma$ we refer to the
$p^2\times1$ vector $\Sigma\otimes\Sigma\text{vec}(e_ie_j^T)$ and $(i,j)$-th
row of $\Sigma\otimes\Sigma$ is its transpose. The $(i,j)$-th row of
$\Sigma\otimes\Sigma$ contains all mixed partial derivatives of $\ell$
with respect to $\Theta_{ij}$ and $\Theta_{kl}$ where $k,l=1,\dots,p$.
Note that $\Theta\otimes\Theta$ may be viewed as a four-dimensional tensor.

\smallskip
We impose some restrictions on the Hessian $\Gamma$ evaluated at the
true $\Theta^*$, $\Gamma^* := \Gamma(\Theta^*)$.
To this end, let us fix the following notation. For any two subsets $T$
and $T'$ of $\mathcal V\times\mathcal V$, we use $\Gamma^*_{TT'}$ to
denote the $|T|\times|T'|$ matrix with rows
and columns of $\Gamma^*$ indexed by $T$ and $T'$ respectively.

Consequently, define $\kappa_{\Gamma^*}$ to be the $\ell_\infty$
operator norm of the inverse of the matrix
\[
\Gamma_{SS}^* = [\Sigma^* \otimes\Sigma^*]_{SS} \in\mathbb R^{s\times s}.
\]
i.e., $\kappa_{\Gamma^*}=\vertiii{(\Gamma^*_{SS})^{-1}}_\infty.$

The parameter $\kappa_{\Gamma^*}$ then measures the size of entries in
$\Theta^*$ and assumptions on its growth are similar to sparsity
assumptions on $\Theta^*$.

\begin{example}\label{ex2}
The parameter $\kappa_{\Gamma^*}$ is difficult to track in general as
it involves inversion of a certain sub-matrix of the Hessian.
A tractable example is the situation when $\Theta^*$ is a block
diagonal matrix with blocks $B_1,\dots,B_k$ for some $1\leq k\leq p$
which only contain non-zero (although possibly arbitrarily small)
entries and the remaining off-diagonal entries of $\Theta^*$ are zero.
Suppose that the sizes of the blocks are $b_1,\dots,b_k$ and denote
$d:=\max_{i=1,\dots,k}b_i.$
This corresponds to a graph with $k$ completely connected but mutually
isolated subgraphs with maximum vertex degree $d.$
Using that block matrices can be easily inverted by inverting each
block separately (and using that $(A\otimes A)^{-1} = A^{-1}\otimes
A^{-1}$), some calculations give
\begin{eqnarray*}
\kappa_{\Gamma^*} = \max_{i=1,\dots,k} \vertiii{B_i\otimes B_i}_\infty
= \max_{i=1,\dots,k} \vertiii{B_i}^2_\infty.
\end{eqnarray*}
The size of $\kappa_{\Gamma^*}$ thus depends on the size of entries in
$B_i's.$ We clearly have the upper bound
\[
\kappa_{\Gamma^*} \leq d\max_{i=1,\dots,k} \max_{j=1,\dots
,b_i}(B_i^{jj})^2 \leq d \Lambda^2_{\max}(\Theta^*).
\]
Hence if the maximum eigenvalue of $\Theta^*$ is bounded, then $\kappa
_{\Gamma^*}=\mathcal O(d).$
This bound is attained for instance when all entries in some row of the
block of size $d\times d$ were bounded away from zero uniformly in $n$
and $\Lambda_{\max}(\Theta^*)=\mathcal O(1)$.

In the trivial case when $\Sigma^*$ and $\Theta^*$ are diagonal
matrices, we have $S =\{(i,i):i=1,\dots,p\}$ and
and $\kappa_{\Gamma^*}={\max_i (\Theta^*_{ii})^2}.$
Then clearly $\kappa_{\Gamma^*}\leq\Lambda_{\max}(\Theta^*).$

The most difficult situation is when $\Theta^*$ a single block, so $S=\{
(i,j):i,j=1,\dots,p\}$
and $\kappa_{\Gamma^*}= \vertiii{ (\Sigma^* \otimes\Sigma^*)^{-1}
}_\infty=
\vertiii{ \Theta^* \otimes\Theta^* }_\infty=
\vertiii{ \Theta^* }^2_\infty.$
If the entries in $\Theta^*$ are fast decaying, for instance $\Theta
_{ij}^* = \tau^{|i-j|}$, $|\tau|<1$
then
\[
\hspace*{90pt}\kappa_{\Gamma^*} = ({1-\tau^p})^2/({1-\tau})^2 =\mathcal O(1).\hspace*{90pt}\qed
\]
\end{example}

\begin{assump}[Irrepresentability condition]
\label{ir}
There exists $\alpha\in(0,1]$ such that
%
\begin{equation}
\max_{e \in S^c}\|\Gamma^*_{eS}(\Gamma^*_{SS})^{-1}\|_1 \leq1-\alpha.
\end{equation}
\end{assump}

Condition \ref{ir} is an analogy of the irrepresentable condition for
variable selection in linear regression \cite{vdgeer09}.
If we define the zero-mean edge random variables (\cite{ravikumar}) as
\[
Y_{(i,j)}:= X_i X_j - \mathbb E(X_iX_j),
\]
then the matrix $\Gamma^*$ corresponds to covariances of the edge
variables, in particular
$\Gamma^*_{(i,j),(k,l)} + \Gamma^*_{(j,i),(k,l)} = \text
{cov}(Y_{(i,j)},Y_{(k,l)})$.
The interpretation of \ref{ir} is that we require that no edge variable
$Y_{(j,k)}$ which is not included in the edge set $S$ is highly
correlated with variables in the edge set \cite{ravikumar}. The
parameter $\alpha$ then is a measure of this correlation with the
correlation growing when $\alpha\rightarrow0$.

Note that one may view $\Gamma^*$ as a four-dimensional tensor, and the
irrepresentability condition is then imposed on the sub-blocks of this
tensor.

\begin{remark}
The results obtained in \cite{ravikumar} (see also Lemma \ref{rates})
imply that under the irrepresentability condition \ref{ir},
the model $\hat S$ selected by the graphical Lasso satisfies $\hat
S\subseteq S$ with high probability. Moreover, under a beta-min
condition on the entries of the true precision matrix, Lemma \ref{rates} part (b)
implies exact variable selection, i.e. $\hat S = S$ with high probability.
Several works then suggest to use post-model selection methods, by
which we
refer to the two-step procedure resulting from first selecting
a model and then estimating the parameters in
the selected model (e.g. by maximum likelihood).
For estimation of regression coefficients in the linear model, simple
post-model selection methods have been proposed e.g. in \cite
{gausslasso,candes2007}. Other approaches using post-model
selection in a more involved way include e.g.
\cite{belloni1,buja}.
We mention that several concerns have been raised considering simple
post-model selection methods,
which are elaborated on in the papers \cite{leeb1,leeb2} or \cite{buja}.
The procedure we suggest in the present paper in principle does not
rely on model selection (see also Remark \ref{nodewise}).
The advantage of our procedure over post-model selection methods is
likely to arise in situations when there are small but non-zero
parameters and thus
the beta-min type condition which guarantees exact variable selection
is violated.
\end{remark}

\begin{assump}[Bounded eigenvalues]\label{eig}
There exists $L \asymp1$ such that
\[
{1}/{L} \leq\Lambda_{\min}(\Theta^*)\leq\Lambda_{\max}(\Theta^*) \leq{L}.
\]
\end{assump}

\begin{remark}
In our analysis to follow in Section \ref{sec:main}, we keep track of the quantities
$\kappa_{\Sigma^*}$ and $\kappa_{\Gamma^*}$ defined above and they
appear in the main
result (Theorem~\ref{res2}). Some examples where the behaviour of $\kappa_{\Sigma
^*}$ and $\kappa_{\Gamma^*}$ is tractable
were discussed in Examples \ref{ex1} and \ref{ex2}. An example of a situation when
$\kappa_{\Sigma^*}$ is bounded is for
instance the T\"oplitz covariance structure. It is not easy to see when
$\kappa_{\Gamma^*}$ is bounded, as
it involves inversion of a certain sub-matrix of the Hessian. This is
of similar difficulty as
verification of the irrepresentability condition (see Assumption \ref
{ir}), which is typically considered only on small
examples \cite{ravikumar} ($p = 4$), \cite{meinshausen} ($p = 4$).
\end{remark}

\section{Main results}\label{sec:main}
In this Section we present the main results which imply inference for
individual parameters of the precision matrix.
We suggest a way to modify the graphical Lasso estimator by removing
the bias term associated with the penalty.
To this end we consider the Karush-Kuhn-Tucker (KKT) conditions for the
graphical Lasso.
For any $\lambda_n>0$ and $\hat\Sigma$ with strictly positive diagonal
elements, the optimization problem \ref{alg} has a unique solution $\hat
\Theta_n\in S^p_{++}$ which is characterized by the KKT conditions
%
\begin{equation}\label{kktc}
\hat\Sigma-\hat\Theta^{-1}+\lambda\hat Z = 0,
\end{equation}
where the matrix $\hat Z$ belongs to the sub-differential of the
off-diagonal norm $\|\cdot\|_{1,\text{off}}$ evaluated at $\hat\Theta$
(Lemma 3 in \cite{ravikumar}).

First we ``invert'' the KKT conditions \eqref{kktc}
by multiplying them by the inverse of the Hessian of the negative
log-likelihood, i.e. $(\Gamma^*)^{-1} = (\Sigma^*\otimes\Sigma^*)^{-1} = {(\Sigma
^*)}^{-1}\otimes(\Sigma^{*})^{-1}=\Theta^* \otimes\Theta^*$ which may
be approximated by plugging in the graphical Lasso estimator to obtain
$\hat\Theta\otimes\hat\Theta$.
As noted in Section \ref{subsec:setup}, when $X_1,\dots,X_n$ are
Gaussian, there is a correspondence between $\Sigma^*\otimes\Sigma^*$
and the Fisher information matrix for $\Theta^*.$

By the properties of the Kronecker product \cite{greene}, this is
equivalent to multiplication of \eqref{kktc} by $\hat\Theta$ from left
and right.
\[
\hat\Theta\hat\Sigma\hat\Theta- \hat\Theta+\hat\Theta\lambda\hat
Z\hat\Theta=0.
\]
Denoting $W:=\hat\Sigma-\Sigma^*$ and rearranging yields
%
\begin{equation}\label{nvm}
\hat\Theta+ \hat\Theta\lambda\hat Z \hat\Theta- \Theta^* = -\Theta
^*W \Theta^* + \text{rem},
\end{equation}
where
%
\begin{equation}\label{remain}
\text{rem} :=-(\hat\Theta- \Theta^* )W\Theta^* - (\hat\Theta\hat\Sigma
-I)(\hat\Theta- \Theta^*).
\end{equation}
The term $\text{rem}$ is shown to be small under sufficient sparsity
(Lemma \ref{res1}) and the leading term $\Theta^*W \Theta^*$ is
(elementwise) asymptotically normal. This suggests to take the modified
non-sparse estimator $\hat\Theta+ \hat\Theta\lambda\hat Z \hat\Theta
$ as an estimator for $\Theta^*.$ Recall that by the KKT conditions
\eqref{kktc}, $\lambda\hat Z$ may be expressed as $\hat\Theta^{-1}-\hat
\Sigma.$
Hence define the de-sparsified graphical Lasso estimator as follows
%
\begin{equation}\label{dsgl}
\hat T := \hat\Theta+ \hat\Theta\lambda\hat Z\hat\Theta= 2\hat\Theta
- \hat\Theta\hat\Sigma\hat\Theta.
\end{equation}
The following auxiliary Lemma gives a bound for the remainder \eqref
{remain} under sub-Gaussian tail assumptions \ref{subg} and \ref{subgv}.
\begin{lema}\label{res1}
Suppose that $X_1,\dots,X_n\in\mathbb R^p$ are independent and
distributed as $X=(X^1,\dots,X^p)$ with $\mathbb EX = 0,$ $\text
{cov}(X) = \Sigma^*.$ Let $\Theta^*=(\Sigma^*)^{-1}$ exist and satisfy
the irrepresentability condition \ref{ir} with a constant $\alpha\in(0,1]$.
Let $\hat\Theta$ be the solution to the optimization problem {\ref
{alg}} with tuning parameter
$\lambda_n =\frac{8}{\alpha} \delta_n$ where
for some $\gamma>2,$
\[
\delta_n := 8(1+12K^2)\max_i\Sigma^*_{ii} \sqrt{2\frac{\log(4p^\gamma)}{n}},
\]
where $K$ is specified below.
Suppose that the sparsity assumption
\[
d \leq\frac{1}{6(1+8/\alpha) \max\{\kappa_{\Sigma^*}\kappa_{\Gamma
^*},\kappa_{\Sigma^*}^3\kappa_{\Gamma^*}^2 \} \delta_n}
\]
and assumption \ref{eig} are satisfied.
\begin{enumerate}[(i)]
\item
Suppose that $X_1,\dots,X_n\in\mathbb R^p$ satisfy \ref{subg} with
$K=\mathcal O(1)$.
Then it follows that
\begin{eqnarray*}
\|\emph{rem}\|_\infty
=\mathcal O_\mathbb P\left(\frac{1}{\alpha^2}\kappa_{\Gamma^*} \max
\{ {d}^{3/2} \log p/n,
\frac{1}{\alpha}\kappa_{\Gamma^*} d^2 (\log p/n)^{3/2}
\}\right)
.
\end{eqnarray*}
%
%
\item
Suppose that $X_1,\dots,X_n\in\mathbb R^p$ satisfy \ref{subgv} with
$K=\mathcal O(1)$. Then
\begin{eqnarray*}
\|\emph{rem}\|_\infty
=
\mathcal O_\mathbb P\left(\frac{1}{\alpha^2}\kappa^2_{\Gamma^*}
\kappa_{\Sigma^*} d\log p/n \right).
\end{eqnarray*}
\end{enumerate}
\end{lema}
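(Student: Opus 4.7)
The plan is to split
\begin{equation*}
\text{rem} = R_1 + R_2, \quad R_1 := -(\hat\Theta - \Theta^*)W\Theta^*, \quad R_2 := -(\hat\Theta\hat\Sigma - I)(\hat\Theta - \Theta^*),
\end{equation*}
and to bound $\|R_1\|_\infty$ and $\|R_2\|_\infty$ separately. All estimates rest on three ingredients: the Ravikumar-type sup-norm rate for $\hat\Theta - \Theta^*$ (which I assume is collected in an earlier Lemma~\ref{rates}), the support inclusion $\mathrm{supp}(\hat\Theta) \subseteq S$ guaranteed by the irrepresentability condition \ref{ir}, which transfers the $d$-row-sparsity of $\Theta^*$ to $\hat\Theta$, and concentration inequalities for $W = \hat\Sigma - \Sigma^*$ under the two sub-Gaussian hypotheses.

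For $R_1$ I would use the row-sparsity Hölder bound
\begin{equation*}
\|R_1\|_\infty \leq \vertiii{\hat\Theta - \Theta^*}_\infty \cdot \|W\Theta^*\|_\infty \leq d\,\|\hat\Theta - \Theta^*\|_\infty \cdot \|W\Theta^*\|_\infty,
\end{equation*}
whose first factor is $O_{\mathbb P}(d\kappa_{\Gamma^*}\sqrt{\log p/n}/\alpha)$ by Lemma~\ref{rates}. The two parts of the lemma differ in how one controls $\|W\Theta^*\|_\infty$. Under \ref{subg} I would write $(W\Theta^*)_{ij} = \sum_{k \in S_j} W_{ik}\Theta^*_{kj}$ (the column support of $\Theta^*$ has size $\le d$) and apply a sub-exponential Bernstein inequality to the $d$-term sum, exploiting cancellation across $k$; with entries of $\Theta^*$ bounded via \ref{eig} this yields a rate of order $\sqrt{d\log p/n}$, which combines with the first factor to give the $d^{3/2}\log p/n$ contribution in~(i). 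Under the stronger \ref{subgv} the key observation is that $X^T\Theta^* e_j$ is itself sub-Gaussian with parameter $O(\|\Theta^* e_j\|_2) = O(1)$ by \ref{eig}, so $(W\Theta^*)_{ij}$ is a centred mean of products of two scalar sub-Gaussians; Bernstein then yields $\|W\Theta^*\|_\infty = O_{\mathbb P}(\kappa_{\Sigma^*}^{1/2}\sqrt{\log p/n})$ with no factor of $d$, producing the cleaner bound in~(ii).

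For $R_2$ I would invoke the KKT equation~\eqref{kktc} to replace $\hat\Theta\hat\Sigma - I$ by $-\lambda\hat\Theta\hat Z$, so that
\begin{equation*}
\|R_2\|_\infty = \lambda\,\|\hat\Theta\hat Z(\hat\Theta - \Theta^*)\|_\infty \le \lambda\,\vertiii{\hat\Theta}_\infty \cdot \|\hat Z\|_\infty \cdot \vertiii{\hat\Theta - \Theta^*}_1.
\end{equation*}
Here $\|\hat Z\|_\infty \leq 1$ by the subgradient characterisation of the off-diagonal $\ell_1$ norm; $\vertiii{\hat\Theta}_\infty \leq d(L + o_{\mathbb P}(1))$ by the support inclusion combined with~\ref{eig}; $\vertiii{\hat\Theta - \Theta^*}_1 = \vertiii{\hat\Theta - \Theta^*}_\infty \lesssim d\kappa_{\Gamma^*}\sqrt{\log p/n}/\alpha$ by symmetry of both matrices; and $\lambda \asymp \sqrt{\log p/n}/\alpha$. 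Multiplying produces a contribution of order $d^2\kappa_{\Gamma^*}(\log p/n)^{3/2}/\alpha^{3}$, which is exactly the second entry of the maximum in~(i); under \ref{subgv} this term is dominated by the $R_1$ contribution and therefore does not appear separately in~(ii).

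The main obstacle will be the careful matrix-norm bookkeeping needed to achieve the $d^{3/2}$, rather than $d^2$, exponent in part~(i): this requires keeping the sum $\sum_k W_{ik}\Theta^*_{kj}$ intact and concentrating it as a single random variable, instead of bounding $\|W\|_\infty$ first and then summing $d$ entries (which would cost an extra $\sqrt{d}$). A secondary point to verify is that under the stated upper bound on $d$ the high-probability event of Lemma~\ref{rates} holds, so that the support inclusion, the sup-norm rate for $\hat\Theta - \Theta^*$, and the concentration bounds for $W$ and $W\Theta^*$ can all be applied simultaneously.
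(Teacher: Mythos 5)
Your decomposition of $\mathrm{rem}$, the use of Lemma~\ref{rates} (support inclusion plus the sup-norm rate), and the KKT substitution $\hat\Theta\hat\Sigma-I=-\lambda\hat\Theta\hat Z$ are all the same ingredients as the paper (its Lemma~\ref{res0}, Bounds I and II). The $R_1$ part is essentially fine, although the ``Bernstein with cancellation across $k$'' step is both unnecessary and delicate (the summands $W_{ik}\Theta^*_{kj}$ over $k$ are built from the same sample, so they are dependent); the paper simply uses $\|W\Theta^*\|_\infty\le \vertiii{\Theta^*}_\infty\|W\|_\infty\le \sqrt d\,\Lambda_{\max}(\Theta^*)\|W\|_\infty=O_{\mathbb P}(\sqrt{d\log p/n})$ under \ref{subg}, and the bilinear concentration Lemma~\ref{conc} (giving $O_{\mathbb P}(\sqrt{\log p/n})$, with no $\kappa_{\Sigma^*}^{1/2}$ factor) under \ref{subgv}.

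The genuine gap is in your $R_2$ bound. With $\lambda\asymp\alpha^{-1}\sqrt{\log p/n}$, $\vertiii{\hat\Theta}_\infty\le d(L+o_{\mathbb P}(1))$ and $\vertiii{\hat\Theta-\Theta^*}_1\le d\|\hat\Theta-\Theta^*\|_\infty\lesssim d\kappa_{\Gamma^*}\alpha^{-1}\sqrt{\log p/n}$, the product is of order $\kappa_{\Gamma^*}d^2(\log p/n)/\alpha^{2}$ — not $d^2(\log p/n)^{3/2}/\alpha^{3}$ as you state (you only have two factors of $\sqrt{\log p/n}$). This corrected order exceeds \emph{both} entries of the maximum in part~(i) (by $\sqrt d$ and by $(\log p/n)^{-1/2}$ respectively), and under \ref{subgv} it is not absorbed by the $d\log p/n$ term either, since the sparsity assumption only gives $d\delta_n=O(1)$, not $d\lesssim\kappa_{\Gamma^*}^2\kappa_{\Sigma^*}/\alpha^2$. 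The paper avoids this double loss of $d$: in its Bound II it pairs $\|\hat\Theta-\Theta^*\|_\infty$ (not $\vertiii{\cdot}_1$) with $\vertiii{\hat\Theta}_1\le\vertiii{\hat\Theta-\Theta^*}_1+\vertiii{\Theta^*}_1\lesssim d\kappa_{\Gamma^*}\delta_n/\alpha+\sqrt d\,\Lambda_{\max}(\Theta^*)=O(\sqrt d)$ under the sparsity condition, which yields the $d^{3/2}\delta_n^2$ and $d^2\delta_n^3$ terms of part~(i); and for part~(ii) it uses Bound I, expanding $\hat\Sigma\hat\Theta-I=W\Delta+\Sigma^*\Delta+W\Theta^*$ so that the dominant contribution is $\vertiii{\Sigma^*}_\infty\|\Delta\|_\infty\vertiii{\Delta}_\infty\lesssim\kappa_{\Sigma^*}\kappa_{\Gamma^*}^2\,d\log p/n/\alpha^2$. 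Your argument can be repaired by replacing $\vertiii{\hat\Theta}_\infty\le dL$ with $\vertiii{\hat\Theta}_\infty\le d\|\hat\Theta-\Theta^*\|_\infty+\sqrt d\,\Lambda_{\max}(\Theta^*)$ (Cauchy--Schwarz on the rows of $\Theta^*$) and invoking the sparsity bound $d\delta_n=O(1)$, but as written the claimed rates do not follow.
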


The quantities $\kappa_{\Sigma^*},\kappa_{\Gamma^*}$ involved in Lemma
\ref{res1}
measure the size of entries in $\Sigma^*$ and $(\Gamma_{SS}^*)^{-1}$ as
discussed in Section \ref{subsec:setup}. The parameter $\alpha$
corresponds to the irrepresentability condition \ref{ir} and affects
the rates when it approaches zero.

If we assume the quantities in Lemma \ref{res1} are bounded, i.e.
$1/\alpha=\mathcal O(1)$, $\kappa_{\Sigma^*}=\mathcal O(1)$ and $\kappa
_{\Gamma^*}=\mathcal O(1)$
then the sparsity assumption reduces to
\[
d\leq\sqrt{n/\log p}
\]
and under \ref{subg} we have
\[
\|\text{rem}\|_\infty=\mathcal O_\mathbb P\left(d^{\frac{3}{2}} \frac
{\log p}{\sqrt{n}}\right),
\]
under \ref{subgv} we have
\[
\|\text{rem}\|_\infty=
\mathcal O_{\mathbb P}\left( d\frac{\log p}{\sqrt{n}} \right)
.
\]
Consequently, we establish asymptotic normality of each element $\hat T_{ij}$ of the de-sparsified estimator
in Theorem \ref{res2} below. Since our aim is inference about
individual elements of $\Theta^*,$ fix $(i,j)\in\mathcal V\times
\mathcal V$. The $k$-th column of $\Theta^*$ will be denoted by $\Theta
^*_{ k}\in\mathbb R^{p}$, $k=1,\dots,p$.
\begin{theorem}\label{res2}
Suppose that $X_1,\dots,X_n\in\mathbb R^p$ are independent and
distributed as $X=(X^1,\dots,X^p)$ with $\mathbb EX = 0,$ $\text
{cov}(X) = \Sigma^*$. Let $\Theta^*=(\Sigma^*)^{-1}$ exist, satisfy the
irrepresentability condition \ref{ir} with a constant $\alpha\in(0,1]$
and assumption \ref{eig}.
Let
\[
\sigma_{ij}^2 := \emph{Var}({\Theta^*_{ i}}^T X_{1}X_{1}^T \Theta^*_{ j})
\]
and suppose that $1/\sigma_{ij}=\mathcal O(1).$
Suppose that $\hat\Theta$ is the solution to the optimization problem
{\ref{alg}} with tuning parameter
$\lambda_n\asymp\sqrt{\log p/n}$.
Suppose the sparsity assumption under \ref{subg}
%
\begin{equation}\label{sp1}
d^{3/2}=o\left(
\frac{\sqrt{n}}{C_1\log p}
\right),
\end{equation}
where
\[
C_1 := \max\left\{
\frac{\kappa_{\Gamma^*}}{\alpha^2} ,
\frac{\kappa_{\Gamma^*}^2}{\alpha^{9/8}} n^{-1/4}(\log p)^{1/8},
\frac{
\max\{\kappa_{\Sigma^*}\kappa_{\Gamma^*}, \kappa_{\Sigma^*}^3\kappa
_{\Gamma^*}^2\}^{3/2}
}{\alpha^{3/2}}
(n \log p)^{-1/4}
\right\}.
\]
and under \ref{subgv}
%
\begin{equation}\label{sp2}
d = o \left(\frac{\sqrt{n}}{
C_2\log p
}
\right),
\end{equation}
where
\[
C_2:=\frac{1}{\alpha}\kappa_{\Gamma^*} \max\left\{
\frac{1}{\alpha}\kappa_{\Gamma^*}\kappa_{\Sigma^*},
\kappa_{\Sigma^*}(\log p)^{-1/2},
\kappa_{\Sigma^*}^3\kappa_{\Gamma^*}(\log p)^{-1/2}
\right\},
\]
is satisfied.
Let $\hat T$ be the de-sparsified graphical Lasso estimator defined in
\eqref{dsgl}.
Then under \ref{subg} with sparsity \eqref{sp1} or under \ref{subgv}
with sparsity \eqref{sp2} for all $(i,j) \in\mathcal V\times\mathcal
V$, it holds that
%
\begin{equation}\label{result}
\sqrt{n}({\hat T_{ij}-\Theta^*_{ij}})/\sigma_{ij} = Z_{ij}^n
+ o_{\mathbb P}(1),
\end{equation}
where $Z_{ij}^n$ converges weakly to $\mathcal N(0,1)$.
\end{theorem}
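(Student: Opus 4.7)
The plan is to reduce Theorem~\ref{res2} to the decomposition already derived in \eqref{nvm} and combine it with the remainder bound of Lemma~\ref{res1} and a standard CLT. By the definition \eqref{dsgl} of $\hat T$ and the identity \eqref{nvm}, I have
\begin{equation*}
\hat T - \Theta^* \;=\; -\Theta^* W \Theta^* + \mathrm{rem},
\end{equation*}
where $W = \hat\Sigma - \Sigma^*$. Taking the $(i,j)$-entry and multiplying by $\sqrt{n}/\sigma_{ij}$, I would write
\begin{equation*}
\sqrt{n}\,(\hat T_{ij} - \Theta^*_{ij})/\sigma_{ij}
\;=\; -\sqrt{n}\,e_i^T\Theta^* W \Theta^* e_j/\sigma_{ij} \;+\; \sqrt{n}\,\mathrm{rem}_{ij}/\sigma_{ij}.
\end{equation*}
Setting $Z_{ij}^n := -\sqrt{n}\,e_i^T\Theta^* W \Theta^* e_j/\sigma_{ij}$, the whole task splits into showing $Z_{ij}^n \rightsquigarrow \mathcal{N}(0,1)$ and showing $\sqrt{n}\,\mathrm{rem}_{ij}/\sigma_{ij} = o_\mathbb{P}(1)$, after which Slutsky gives \eqref{result}.

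For the leading term, I would expand
\begin{equation*}
e_i^T\Theta^* W \Theta^* e_j \;=\; {\Theta^*_{\cdot i}}{}^T W\,\Theta^*_{\cdot j}
\;=\; \frac{1}{n}\sum_{k=1}^n \bigl({\Theta^*_{\cdot i}}{}^T X_k X_k^T \Theta^*_{\cdot j} - \Theta^*_{ij}\bigr),
\end{equation*}
using that ${\Theta^*_{\cdot i}}{}^T \Sigma^* \Theta^*_{\cdot j} = e_i^T \Theta^* \Sigma^* \Theta^* e_j = \Theta^*_{ij}$. This is a scaled sum of i.i.d.\ mean-zero random variables with variance $\sigma_{ij}^2$, so the classical CLT applies provided one verifies a Lyapunov/Lindeberg condition. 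Here the sub-Gaussianity assumption (\ref{subg} or \ref{subgv}) combined with the bounded eigenvalue assumption \ref{eig} gives uniform control on $\|\Theta^*_{\cdot i}\|_2$ and hence on higher moments of ${\Theta^*_{\cdot i}}{}^T X_1 X_1^T \Theta^*_{\cdot j}$, which together with $1/\sigma_{ij} = O(1)$ yields the required Lyapunov bound. Thus $Z_{ij}^n \rightsquigarrow \mathcal{N}(0,1)$.

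For the remainder, I would invoke Lemma~\ref{res1}. Note that the choice $\lambda_n \asymp \sqrt{\log p/n}$ and $1/\sigma_{ij} = O(1)$ are compatible with its hypotheses; moreover the third term in $C_1$ (resp.\ the analogous piece of $C_2$) is precisely what is needed to enforce the sparsity assumption $d \lesssim 1/(\max\{\kappa_{\Sigma^*}\kappa_{\Gamma^*},\kappa_{\Sigma^*}^3\kappa_{\Gamma^*}^2\}\delta_n)$ of Lemma~\ref{res1}, so the lemma applies. Under \ref{subg}, multiplying the two terms in the $\|\mathrm{rem}\|_\infty$ bound by $\sqrt{n}$ yields
\begin{equation*}
\sqrt{n}\,\|\mathrm{rem}\|_\infty = \mathcal{O}_\mathbb{P}\!\left(\tfrac{\kappa_{\Gamma^*}}{\alpha^2}\,\tfrac{d^{3/2}\log p}{\sqrt{n}}
\;+\; \tfrac{\kappa_{\Gamma^*}^2}{\alpha^3}\,\tfrac{d^2 (\log p)^{3/2}}{n}\right),
\end{equation*}
and the first (resp.\ second) rate is driven to $0$ by the first (resp.\ second) element of the maximum defining $C_1$ in \eqref{sp1}. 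The \ref{subgv} case is analogous and easier, using $C_2$ in place of $C_1$. Combining, $\sqrt{n}\,\mathrm{rem}_{ij}/\sigma_{ij} = o_\mathbb{P}(1)$ as required.

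The main obstacle I anticipate is bookkeeping: making sure that each of the three terms in $C_1$ and the three-term maximum in $C_2$ exactly matches what is required for either (i) the applicability of Lemma~\ref{res1} or (ii) the $\sqrt{n}$-rescaled remainder being $o_\mathbb{P}(1)$. The other subtle point is justifying the CLT uniformly in $n$, since $p, \Theta^*$ and $\sigma_{ij}$ all depend on $n$; here the combination of sub-Gaussianity of (linear combinations of) the $X_k^i$ and the bounded-eigenvalue assumption is what delivers the Lyapunov condition with constants uniform in $n$.
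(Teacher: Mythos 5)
Your overall route is the same as the paper's: use the decomposition $\hat T-\Theta^*=-\Theta^*W\Theta^*+\mathrm{rem}$ from \eqref{nvm}, control $\sqrt n\,\mathrm{rem}$ via Lemma~\ref{res1} together with the sparsity conditions \eqref{sp1}--\eqref{sp2}, and prove a CLT for the i.i.d.\ sum $\frac{1}{\sqrt n}\sum_k({\Theta^*_{i}}^TX_kX_k^T\Theta^*_{j}-\Theta^*_{ij})$. The remainder part of your argument is essentially the paper's.

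The genuine gap is in the CLT step under condition \ref{subg}. You justify the Lindeberg/Lyapunov condition by claiming that sub-Gaussianity plus \ref{eig} gives ``uniform control on higher moments'' of $Z_{ij,1}={\Theta^*_{i}}^TX_1X_1^T\Theta^*_{j}-\Theta^*_{ij}$. That is true under \ref{subgv} (where ${\Theta^*_{i}}^TX_1$ is sub-Gaussian with a constant independent of $n$, so $Z_{ij,1}$ is sub-exponential with uniform constants), but it fails under \ref{subg}: there only the individual components $X^k$ are sub-Gaussian, and ${\Theta^*_{i}}^TX_1$ is a linear combination of up to $d=d_n\to\infty$ of them, so no $n$-uniform moment bound is available. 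The paper's Lemma~\ref{subexp} quantifies this: the tail is only $\mathbb P(|Z_{ij,1}|>t)\le 4d\,e^{-t/(c_1 d)}$, with scale growing like $d$. A third-moment Lyapunov ratio then behaves like $d^3(\log d)^3/\sqrt n$, which the assumed sparsity $d^{3/2}\log p=o(\sqrt n)$ does \emph{not} force to zero, so the Lyapunov route you sketch does not close the argument under \ref{subg}. The paper instead verifies the Lindeberg condition directly: it writes $\mathbb E\bigl(Z_{ij,1}^2\mathbf 1(|Z_{ij,1}|>\varepsilon\sigma_{ij}\sqrt n)\bigr)$ via Fubini as a boundary term plus a tail integral, and uses the exponential tail at the truncation level $\varepsilon\sigma_{ij}\sqrt n$, where $e^{-\varepsilon\sigma_{ij}\sqrt n/(c_1d)}$ kills every polynomial factor precisely because the sparsity assumption makes $\sqrt n/d\to\infty$ fast enough. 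You need this (or an equivalent truncation argument exploiting the $d$-dependent sub-exponential tail) to make the \ref{subg} case rigorous; under \ref{subgv} your argument is fine as stated.
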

When the quantities $\kappa_{\Gamma^*}$, $\kappa_{\Sigma^*}$ and
$1/\alpha$ are assumed to be bounded,
then the sparsity assumptions of Theorem \ref{res2} reduce to
$d^{3/2}=o(\sqrt{n}/{\log p})$
under \ref{subg} and
$d=o(\sqrt{n}/{\log p})$
under \ref{subgv}.
The latter condition $d=o(\sqrt{n}/{\log p})$ is the same sparsity
assumption as required for construction of confidence intervals for
regression coefficients using the de-sparsified Lasso \cite{vdgeer13}.

The asymptotic variance $\sigma_{ij}$ in Theorem \ref{res2} is
typically unknown, so to construct confidence intervals one needs to
use a consistent estimator $\hat\sigma_{ij} >0$ for $\sigma_{ij}.$
For the case of Gaussian observations, we may easily calculate the
theoretical variance and plug in the estimate $\hat\Theta$ in place of
the unknown $\Theta^*$ as is displayed in Lemma \ref{var} below.
\begin{lema}\label{var} Suppose that assumption \ref{eig} is satisfied
and assume that
$X_1,\dots,$ $X_n\in\mathbb R^p$ are independent $\mathcal N(0,\Sigma
^*)$. Let $\hat\Theta$ be the graphical Lasso estimator, let $\lambda
\asymp\sqrt{\log p/n}$
and suppose the sparsity assumption
\[
d \leq\frac{\sqrt{n}}{\log p (1+8/\alpha) \max\{\kappa_{\Sigma^*}\kappa
_{\Gamma^*},\kappa_{\Sigma^*}^3\kappa_{\Gamma^*}^2 \} }.
\]
Then $\sigma_{ij}^2 = \Theta^*_{ii} \Theta^*_{jj} + {\Theta^*_{ij}}^2,$
$1/\sigma_{ij}=\mathcal O(1)$ and
for $\hat\sigma^2_{ij}:= \hat\Theta_{ii} \hat\Theta_{jj} + \hat\Theta_{ij}^2$
we have
\[
|\hat\sigma^2_{ij} - \sigma^2_{ij}| = \mathcal O_{\mathbb P}\left(
1/\alpha\kappa_{\Gamma^*}\sqrt{\log p/n}
\right)
.
\]
\end{lema}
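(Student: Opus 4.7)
The plan has three distinct parts, mirroring the three claims of the lemma.

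First, I would compute $\sigma_{ij}^2$ explicitly. Setting $Y = (\Theta^*_{\cdot i})^T X_1$ and $Z = (\Theta^*_{\cdot j})^T X_1$, these are jointly zero-mean Gaussian under $X_1 \sim \mathcal N(0,\Sigma^*)$ with covariances
\[
\mathrm{Var}(Y) = e_i^T \Theta^* \Sigma^* \Theta^* e_i = \Theta^*_{ii}, \quad \mathrm{Var}(Z) = \Theta^*_{jj}, \quad \mathrm{Cov}(Y,Z) = \Theta^*_{ij},
\]
using $\Theta^*\Sigma^* = I$. By Isserlis' theorem for zero-mean Gaussians, $\mathbb E[Y^2 Z^2] = \mathrm{Var}(Y)\mathrm{Var}(Z) + 2\,\mathrm{Cov}(Y,Z)^2$ and $\mathbb E[YZ] = \mathrm{Cov}(Y,Z)$, so
\[
\sigma_{ij}^2 = \mathrm{Var}(YZ) = \Theta^*_{ii}\Theta^*_{jj} + (\Theta^*_{ij})^2,
\]
which is the claimed closed form.

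Second, for the lower bound on $\sigma_{ij}$, I would note that since $\Theta^* \succeq (1/L) I$ by assumption \ref{eig}, the diagonal entries satisfy $\Theta^*_{ii} = e_i^T \Theta^* e_i \geq 1/L$ for every $i$. Hence $\sigma_{ij}^2 \geq \Theta^*_{ii}\Theta^*_{jj} \geq 1/L^2$, giving $1/\sigma_{ij} = \mathcal O(1)$.

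Third, for the plug-in estimator, I would invoke the sup-norm convergence rate of the graphical Lasso under the stated irrepresentability, eigenvalue, and sparsity conditions (the bound referenced in Lemma~\ref{rates} from \cite{ravikumar}), which yields $\|\hat\Theta - \Theta^*\|_\infty = \mathcal O_{\mathbb P}(\kappa_{\Gamma^*}\sqrt{\log p/n}/\alpha)$. Writing $\Delta = \hat\Theta - \Theta^*$ and expanding,
\[
\hat\sigma_{ij}^2 - \sigma_{ij}^2 = \Theta^*_{ii}\Delta_{jj} + \Theta^*_{jj}\Delta_{ii} + \Delta_{ii}\Delta_{jj} + 2\Theta^*_{ij}\Delta_{ij} + \Delta_{ij}^2.
\]
Since $|\Theta^*_{kl}| \leq \Lambda_{\max}(\Theta^*) \leq L$ for all $k,l$ (as $|e_k^T\Theta^*e_l| \leq \Lambda_{\max}(\Theta^*)$), the linear terms are of order $\|\Delta\|_\infty$ and the quadratic terms are of lower order, yielding the claimed bound.

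The argument is essentially bookkeeping — the only nontrivial ingredient is the sup-norm rate for $\hat\Theta$, and one must check that the sparsity condition imposed in the lemma's hypotheses is precisely what triggers that rate under the Ravikumar et al.\ framework. There is no serious obstacle; the main thing to verify is that the conditions suffice to invoke the external rate result cleanly.
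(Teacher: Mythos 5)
Your proposal is correct and follows essentially the same route as the paper: compute $\sigma_{ij}^2=\Theta^*_{ii}\Theta^*_{jj}+(\Theta^*_{ij})^2$ from the Gaussian moment identity (the paper works with $Z=\Theta^*X\sim\mathcal N(0,\Theta^*)$, which is the same computation as your Isserlis step), lower-bound $\sigma_{ij}$ via the bounded-eigenvalue assumption, and control $|\hat\sigma_{ij}^2-\sigma_{ij}^2|$ by the same expansion in $\Delta=\hat\Theta-\Theta^*$ combined with the sup-norm rate of Lemma~\ref{rates} and the concentration bound of Lemma~\ref{hp}. No substantive differences.
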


Hence by Lemma \ref{var} under assumptions of Theorem \ref{res2} we
have $\sigma_{ij}/\hat\sigma_{ij}=o_{\mathbb P}(1)$ and we may replace
$\sigma_{ij}$ by $\hat\sigma_{ij}$.

Theorem \ref{res2} also implies convergence rates for the de-sparsified
graphical Lasso estimator $\hat T$ in supremum norm.
Under \ref{subgv} and assumptions of Theorem \ref{res2} we have the
upper bound
%
\begin{eqnarray}\nonumber
\|\hat T-\Theta^*\|_\infty&\leq&\|\Theta^*W\Theta^*\|_\infty+\|\text
{rem}\|_\infty\\
&=&\label{lev}
\mathcal O_{\mathbb P}\left(
\max\left\{
\sqrt{\frac{\log p}{n}},
\frac{1}{\alpha^2}\kappa^2_{\Gamma^*}\kappa_{\Sigma^*} \frac{\log p}{n}
\right\}
\right).
\end{eqnarray}
Assuming $\kappa_{\Gamma^*}$, $\kappa_{\Sigma^*}$ and $1/\alpha$
bounded implies
%
\begin{equation}\label{trates}
\|\hat T-\Theta^*\|_\infty=\mathcal O_{\mathbb P}(\max\{
\sqrt{\log p/n},
d\log p/n
\}
).
\end{equation}
Under sparsity $d=o(\sqrt{n}/\log p)$ we have $\|\hat T-\Theta^*\|
_\infty=\mathcal O_{\mathbb P}(\sqrt{\log p/n}).$

Consequently, under the conditions of Lemma \ref{var} thresholding
$\hat T_{ij}$ at level
$\Phi^{-1}(1-\frac{\alpha}{p(p-1)})\frac{\hat\sigma
_{ij}}{\sqrt{n}}$ for all $i,j$ will remove all zero entries
with probability $1-\alpha$ asymptotically.

\begin{remark}
The quantities $\kappa_{\Sigma^*}, \kappa_{\Gamma^*}$ and $\alpha$
involved in our analysis arise from the deterministic analysis of the
graphical Lasso as carried out in \cite{ravikumar}.
Provided that the quantities $\kappa_{\Sigma^*}, \kappa_{\Gamma^*}$ and
$1/\alpha$ remain bounded and assuming sub-Gaussianity \ref{subgv}, the
only additional restriction that arises from our analysis is the
sparsity restriction by a factor $\sqrt{n}$ which is needed to ensure
that the remainder term in Lemma \ref{res1} vanishes asymptotically. As
mentioned above, this is the same assumption which is needed to
establish asymptotic normality of the de-sparsified Lasso in linear
regression \cite{vdgeer13}.
Under these assumptions, the estimator $\hat T$ achieves optimal rate
of convergence under the assumed model which follows from \eqref
{trates} and the work
\cite{zhou}.
\end{remark}
\begin{remark}\label{nodewise}
One could consider the approach presented above for other initial
estimators of the precision matrix than the graphical Lasso.
An estimator of the precision matrix based on the nodewise regression
approach (\cite{meinshausen})
is outlined in \cite{skript}.
Asymptotic normality of the estimator in \cite{skript} may then be
obtained under
bounded eigenvalues of the true precision matrix,
row sparsity of $\Theta^*$ of small order $\sqrt{n}/\log p$
and assuming fourth-order moment conditions on the $X_i$'s (see~\cite{skript}).
To avoid digressions we do not elaborate on this alternative approach
in the present paper.
We note that the present analysis using the graphical Lasso requires in
addition to the conditions mentioned above the irrepresentability
condition. However, inspection of the proof of Theorem \ref{res2} reveals that
it is rather the $\ell_1-$norm oracle rates that are needed, but only
results assuming the irrepresentability condition are available in the
literature on the graphical Lasso at the moment. It is as of yet not
clear whether the irrepresentability condition is also necessary for
obtaining oracle rates for the graphical Lasso.
We also refer here to \cite{vdgeer14} where the results of \cite
{ravikumar} are extended using an irrepresentable condition on the
small (not necessarily zero) entries of $\Theta^*$.
\end{remark}
%

\section{Empirical Results}\label{sec:emp}
\subsection{Simulation Study}

In this part we illustrate the theoretical results on simulated data
and demonstrate the performance of the proposed estimator on inference,
giving a comparison to some alternative methodologies.
To this end, we consider sparse Gaussian graphical models which may be
fully specified by a precision matrix $\Theta^*.$
Thus the random sample is distributed as $X\sim\mathcal N(0,\Sigma
^*),$ where $\Theta^* = (\Sigma^*)^{-1}$.

We consider the chain graph on $p$ vertices where the maximum vertex
degree is by definition restricted to $d=2$.
The corresponding precision matrix
$\Theta^*$ is a tridiagonal matrix, $\Theta^* = \text{tridiag}(\rho
,1,\rho)$ for a given $\rho>0.$
The cardinality of the active set is then $3p-2.$
To solve the graphical Lasso program \ref{alg}, we have used the
implemented procedure \texttt{glasso} of Friedman et al. \cite{glasso}.

Denote by $\hat\Theta$ the graphical Lasso estimator and by $\hat T$
the de-sparsified graphical Lasso estimator.
In figure \ref{fig:hist}
we report histograms of $\sqrt{n}(\hat T-\Theta^*_{ij})/\hat\sigma
_{ij}$ for $(i,j)\in\{(1,1),(1,2),$ $(1,3),(1,4)\}$
with the density of $\mathcal N(0,1)$ superimposed.
%
\begin{figure}[h!]
\centering
\textbf{Asymptotic normality}\\[6pt]
\includegraphics[width=\textwidth]{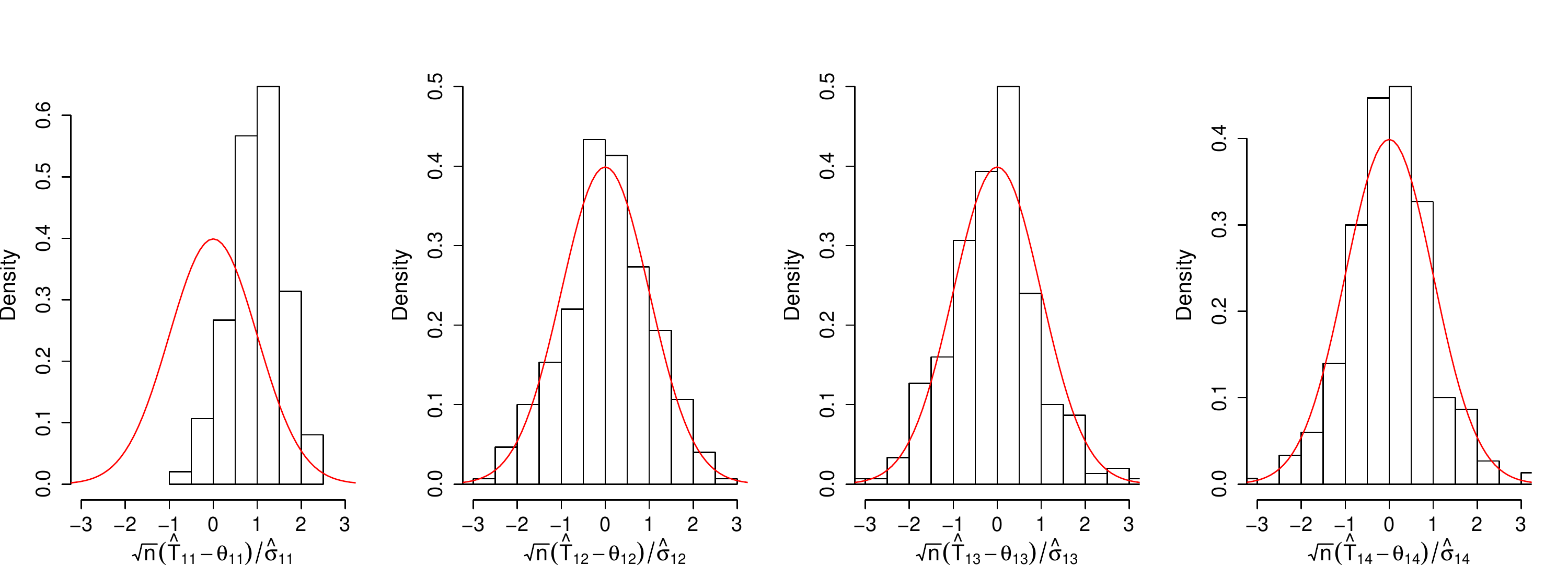}
\caption{Histograms for $\sqrt{n}(\hat T_{ij}-\Theta^*_{ij})/\hat\sigma_{ij}$,
$(i,j)\in\{(1,1),(1,2),(1,3),(1,4)\}$. The sample size was $n=500$ and
the number of parameters $p=100$. The de-sparsified graphical Lasso
estimator was calculated $300$ times.
The model was the chain graph with $\rho=0.3$.}\label{fig:hist}
\end{figure}
By Theorem \ref{res2}, it follows that\vadjust{\goodbreak} the $(1-\alpha)100\%$ asymptotic
confidence interval for $\Theta^*_{ij}$ is given by
\[
I_{ij} \equiv I_{ij}(\hat\Theta_{ij},\alpha,n) := [\hat T_{ij} - \Phi
^{-1}(1-\alpha/2)\frac{\sigma_{ij}}{\sqrt{n}}, \hat T_{ij} + \Phi
^{-1}(1-\alpha/2)\frac{\sigma_{ij}}{\sqrt{n}}],
\]
where we replace the unknown variance $\sigma_{ij}^2$ by the plug-in
estimate $\hat\sigma_{ij}^2 = \hat\Theta_{ii}\hat\Theta_{jj}+\hat\Theta
_{ij}^2$ (Lemma \ref{var}).
For each parameter $\Theta^*_{ij}$, the probability that the true value
$\Theta^*_{ij}$ is covered by the confidence interval
was estimated by its empirical version,
$\hat\alpha_{ij} := \mathbb P_N \mathbf1_{\{\Theta_{ij}^* \in
I_{ij,\alpha}\}}.$
The number of iterations used to calculate the estimates $\hat\alpha
_{ij}$ was set to $N=50.$
Next for a set $A\subset\mathcal V\times\mathcal V$ define the
average coverage over the set $A$ as
\[
\text{Avgcov}_A := \frac{1}{|A|}\sum_{(i,j)\in A} \hat\alpha_{ij}.
\]
After obtaining the estimates $\hat\alpha_{ij}$, we have averaged them
over the sets $S$ and $S^c$ to obtain $Avgcov_{S}$ and $Avgcov_{S^c},$
respectively.
Similarly, we have calculated the average length of the confidence
interval for each parameter $\Theta^*_{ij}$ from $N=50$ iterations and
again averaged these over the sets $S$ and $S^c$ to obtain
$Avglength_{S}$ and $Avglength_{S^c}$.

We compare the performance of the graphical Lasso-based confidence
intervals with three other methods.
The first method is based on the oracle maximum likelihood estimator
with the non-zero set $S$ pre-specified. This method only serves as a
theoretical benchmark as it is asymptotically efficient if the true
non-zero set $S$ is known.
The second method is a post-model selection method: the maximum
likelihood estimator is applied to the model selected by the graphical
Lasso (see Lemma \ref{rates}: under the irrepresentability condition, the
graphical Lasso selects a model $\hat S\subseteq S$).
Then the confidence intervals are constructed using asymptotic
normality of the maximum likelihood estimator.
The third method is based on the sample covariance matrix $\hat\Sigma$
which is the MLE estimator for $\Sigma^*$.
Its inverse is then the maximum likelihood estimator for the precision
matrix $\Theta^*$.
In the fixed $p$ setting, the inverse sample covariance matrix $\hat
\Sigma^{-1}$ is thus an asymptotically normal and efficient estimator
of the precision matrix when the observations are Gaussian. This allows
for construction of confidence intervals in the classical way, using
asymptotic normality of $\hat\Theta=\hat\Sigma^{-1}$, hence the
confidence interval for $\Theta^*_{ij}$ is given by
$\hat\Theta_{ij}\pm\Phi^{-1}(1-\alpha/2)\frac{\hat\sigma_{ij}}{\sqrt
{n}}$.

\begin{table}[t!!]
\caption{The tables above show a comparison of four methods for
construction of confidence intervals: the de-sparsified graphical Lasso
(``De-sp. graphical Lasso''), the maximum likelihood estimator with
specified set $S$ (``MLE with specified $S$''), the maximum likelihood
estimator based on the non-zero set $\hat S$ selected by the graphical
Lasso (``MLE based on $\hat S$'') and an estimator based on the sample
covariance matrix $\hat\Sigma$ (``Sample covariance'').
The table shows average coverages and average lengths of the
constructed confidence intervals over the sets $S$ and $S^c$ (where
applicable). For the method ``MLE based on $\hat S$'', the reported
averages are over $\hat S\cap S.$
The regularization parameter for the graphical Lasso was chosen
$\lambda= \sqrt{\frac{\log p}{n}}$ in all simulations.
The true precision matrix corresponds to a chain graph with $p$
vertices and $\rho$ equal to $0.3,0.4,0.2$ for settings S1,S2,S3,
respectively}\label{fig:tab1}
\smallskip\textbf{Estimated coverage probabilities and lengths}\par\smallskip
\begin{tabular}{ccccc}
\hline
\multirow{2}{*}{ \textbf{S1.} $p=80,n=250,\rho=0.3$ } & $S$ & $S$ & $S^c$ & $S^c$ \\
& Avgcov & Avglength & Avgcov & Avglength \\
\hline\\[-2ex]
De-sp. graphical Lasso & 0.934 & 0.247 & 0.972 & 0.215 \\
MLE with specified $S$ & 0.940 & 0.308 & -- &-- \\
MLE based on $\hat S$ & 0.887 & 0.325 & -- & -- \\
Sample covariance & 0.459 & 0.428 & 0.897 & 0.367 \\
[0.1cm] \hline     \vspace*{3pt}
\end{tabular}
\begin{tabular}{ccccc}\hline
\multirow{2}{*}{\textbf{S2.} $p=100,n=200,\rho=0.4$ } & $S$ & $S$ & $S^c$ & $S^c$ \\
& Avgcov & Avglength & Avgcov & Avglength \\
\hline\\[-2ex]
De-sp. graphical Lasso & 0.925 & 0.288 & 0.974 & 0.250 \\
MLE with specified $S$ & 0.945 & 0.349 & -- &-- \\
MLE based on $\hat S$ & 0.856 & 0.374 & -- & -- \\
Sample covariance & -- & -- & -- & -- \\
[0.1cm] \hline       \vspace*{3pt}
\end{tabular}
\begin{tabular}{ccccc}\hline
\multirow{2}{*}{ \textbf{S3.} $p=100,n=200,\rho=0.2$ } & $S$ & $S$ &
$S^c$ & $S^c$ \\
& Avgcov & Avglength & Avgcov & Avglength \\
\hline\\[-2ex]
De-sp. graphical Lasso & 0.951 & 0.301 & 0.964 & 0.263 \\
MLE with specified $S$ & 0.943 & 0.357 & -- &-- \\
MLE based on $\hat S$ & 0.747 & 0.328 & -- & -- \\
Sample covariance & -- & -- & -- & -- \\
[0.1cm] \hline
\end{tabular}\vspace*{3pt}
\end{table}

The results are reported in Tables \ref{fig:tab1} and \ref{fig:tab2}. The de-sparsified graphical
Lasso performs well also when compared to the oracle (``MLE with
specified $S$''). In Table \ref{fig:tab1}, settings S2 and S3 differ only in the
value of $\rho.$
The de-sparsified graphical Lasso performs well for both of these
settings, while the the post-model selection method (``MLE based on
$\hat S$'') shows lower coverage for setting S3, where $\rho$ is
comparable in magnitude to the noise level.

In table \ref{fig:tab2},
the sample size is kept fixed at $n=100$ while the dimension of the
parameter is increased, hence
we observe lower coverage on $S$ for large values of $p$, as
expected.\vadjust{\goodbreak}

\smallskip\noindent
\textit{The choice of the regularization parameter}\quad
Theory implies that the correct choice of the regularization parameter
satisfies $\lambda_n \asymp\sqrt{\frac{\log p}{n}}$. Lemma \ref{res1}
gives an explicit prescription for $\lambda$, however, this
theoretically obtained value of $\lambda$ is very large.
For all the numerical experiments, we have chosen $\lambda_n=\sqrt{\frac
{\log p}{n}}$.

\subsection{Real data experiment}
We consider a dataset about riboflavin (vitamin $B_2$) production by
bacillus subtilis without the response variable. The dataset is
available from the \texttt{R} package \texttt{hdi}.

The dataset contains observations of $p=4088$ logarithms of gene
expression levels from $n=71$ genetically engineered mutants of
bacillus subtilis. We are interested in modeling the conditional
independence structure of the covariates (logarithms of gene expression
levels) and we try to estimate the associated graphical model using the
de-sparsified graphical Lasso. We only consider the first $500$
covariates which have the highest variances.

In the first step, we split the sample and use 10 randomly chosen
observations to estimate the variances of the $500$ variables. With the
estimated variances, we scale the design matrix containing the
remaining $61$ observations.

We calculate the graphical Lasso using the tuning parameter as in
the\break
simulations, $\lambda=\sqrt{\log p/n}$,
and hence calculate the de-sparsified graphical Lasso.

We threshold the de-sparsified graphical Lasso at level
$\Phi^{-1}(1-\frac{\alpha}{p(p-1)}) \hat\sigma_{ij}/\sqrt
{n},$ where $\alpha=0.05$ and $\hat\sigma_{ij}^2= \hat\Theta_{ii}\hat
\Theta_{ii}+\hat\Theta_{ij}^2$ is an estimate of the asymptotic
variance calculated under the assumption of normality and using the
graphical Lasso estimator $\hat\Theta.$
We identify $5$ edges as significant.

For independently permuted variables (for each variable, a different
per\-mutation is used), the conditional dependencies are broken (the
truth is\break  the empty graph), and the de-sparsified graphical Lasso
correctly detects
zero  edges.\looseness=1

\begin{table}[t]
\caption{A table showing the performance of the
de-sparsified graphical Lasso
for number of parameters $p$ taking values $100,200,300,500$ and
$n=100$. The regularization parameter was chosen $\lambda= \sqrt{\frac
{\log p}{n}}$ in all simulations. The constant $\rho$ is $0.3$ in the
definition of $\Theta^*$}\label{fig:tab2}
\medskip\textbf{Estimated coverage probabilities and lengths}\par\smallskip
\begin{tabular}{ccccc}\hline
{\multirow{2}{*}{Chain graph}} & $S$ & $S$ & $S^c$ & $S^c$ \\
& Avgcov & Avglength & Avgcov & Avglength \\
\hline
{p = 100} & 0.931 & 0.401 & 0.978 & 0.348 \\
{p = 200} & 0.917 &0.400 & 0.984 & 0.349 \\
{p = 300} & 0.893 &0.401 & 0.988 & 0.349 \\
{p = 500} & 0.832 & 0.401 & 0.988 & 0.350\\
\hline
\end{tabular}     \vspace*{3pt}
\end{table}
%

\section{Proofs}\label{sec:proofs}

\begin{lema} \label{res0}
Suppose that $X_1,\dots,X_n\in\mathbb R^p$ are independent and
distributed as $X=(X^1,\dots,X^p)$ with $\mathbb EX = 0,$ $\text
{cov}(X) = \Sigma^*.$ Let $\Theta^*=(\Sigma^*)^{-1}$ exist and satisfy
the irrepresentability condition \ref{ir} with a constant $\alpha\in(0,1]$.
Let $\hat\Theta_n$ be the solution to the optimization problem {\ref
{alg}} with tuning parameter
$\lambda_n =\frac{8}{\alpha} \delta_n$ for some $\delta_n>0$ and
suppose that the sparsity assumption
%
\begin{equation}\label{sparsity.ravikumar}
d \leq\frac{1}{6(1+8/\alpha) \max\{\kappa_{\Sigma^*}\kappa_{\Gamma
^*},\kappa_{\Sigma^*}^3\kappa_{\Gamma^*}^2 \} \delta_n}
\end{equation}
is satisfied.
Then
on the set
$\mathcal T_n = \{\|\hat\Sigma-\Sigma^*\|_\infty<\delta_n\},$ we have

\smallskip\noindent\underline{Bound I}
%
\begin{eqnarray}\label{rem.sp1}
\|\emph{rem}\|_\infty
= \mathcal O\biggl(
\frac{1}{\alpha}\kappa_{\Gamma^*}\max\{
{d} \delta_n \|\Theta^* W\|_\infty,
\frac{1}{\alpha}\kappa_{\Gamma^*} d^2 \delta^3_n,
\frac{1}{\alpha}\kappa_{\Gamma^*}\kappa_{\Sigma^*} d\delta^2_n
\}
\biggr)
\end{eqnarray}
%
\smallskip\underline{Bound II}
%
\begin{eqnarray}\label{rem.sp2}
\|\emph{rem}\|_\infty
=\mathcal O\biggl(
\frac{1}{\alpha} \kappa_{\Gamma^*}
\max\{{d} \delta_n \|\Theta^* W\|_\infty,
\frac{1}{\alpha^2} \kappa_{\Gamma^*} d^2 \delta^3_n,
\frac{1}{\alpha} \Lambda_{\max}(\Theta^*) {d}^{3/2} \delta^2_n
\}\biggr).
\end{eqnarray}
\end{lema}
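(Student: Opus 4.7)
The plan is to condition on the event $\mathcal{T}_n$ and reduce the bound on $\|\mathrm{rem}\|_\infty$ to matrix-norm inequalities applied to the error $\hat\Delta := \hat\Theta-\Theta^*$, after invoking the deterministic part of the Ravikumar--Wainwright--Raskutti analysis of the graphical Lasso (Lemma~\ref{rates}). Under the tuning $\lambda_n = 8\delta_n/\alpha$, the sparsity condition \eqref{sparsity.ravikumar}, and the irrepresentability condition \ref{ir}, that result supplies on $\mathcal{T}_n$ (a)~the support inclusion $\mathrm{supp}(\hat\Theta)\subseteq S$, so each row and column of $\hat\Delta$ has at most $d$ non-zero entries, whence $\vertiii{\hat\Delta}_\infty \le d\|\hat\Delta\|_\infty$; and (b)~the elementwise bound $\|\hat\Delta\|_\infty \lesssim \kappa_{\Gamma^*}\delta_n/\alpha$.

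Next I would decompose the remainder into three trilinear terms that can be controlled one by one. Using $\Theta^*\Sigma^*=I$ one has $\hat\Theta\hat\Sigma - I = \hat\Theta W + \hat\Delta\Sigma^*$, so \eqref{remain} becomes
$$
\mathrm{rem} \;=\; -\hat\Delta\,W\,\Theta^* \;-\; \hat\Theta\,W\,\hat\Delta \;-\; \hat\Delta\,\Sigma^*\,\hat\Delta \;=:\; T_1+T_2+T_3.
$$
For $T_1$, the inequality $\|AB\|_\infty \le \vertiii{A}_\infty\|B\|_\infty$ gives $\|T_1\|_\infty \le \vertiii{\hat\Delta}_\infty\|W\Theta^*\|_\infty \lesssim (d\kappa_{\Gamma^*}\delta_n/\alpha)\,\|\Theta^*W\|_\infty$, which is the first contribution inside the $\max\{\cdot\}$ of both bounds. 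For $T_2$, I would split $\hat\Theta W = \Theta^*W + \hat\Delta W$; since $\|W\|_\infty<\delta_n$ on $\mathcal{T}_n$, (a)--(b) yield $\|\hat\Delta W\|_\infty \le \vertiii{\hat\Delta}_\infty\|W\|_\infty \lesssim d\kappa_{\Gamma^*}\delta_n^2/\alpha$, and then $\|T_2\|_\infty \le \|\hat\Theta W\|_\infty\,\vertiii{\hat\Delta}_\infty$ produces both a term that is absorbed into $T_1$'s contribution and the cubic term $d^2\kappa_{\Gamma^*}^2\delta_n^3/\alpha^2$ common to \eqref{rem.sp1}--\eqref{rem.sp2}.

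The two different bounds stem only from the way $T_3=-\hat\Delta\Sigma^*\hat\Delta$ is handled. For Bound~I I would use the all-$\ell_\infty$ chain $\|T_3\|_\infty \le \vertiii{\hat\Delta}_\infty\cdot\vertiii{\Sigma^*}_\infty\|\hat\Delta\|_\infty \le d\kappa_{\Sigma^*}\|\hat\Delta\|_\infty^2$ which, combined with (b), produces the third piece $d\kappa_{\Sigma^*}\kappa_{\Gamma^*}^2\delta_n^2/\alpha^2$ of \eqref{rem.sp1}. For Bound~II I would instead apply Cauchy--Schwarz in $\ell_2$,
$|e_i^T\hat\Delta\Sigma^*\hat\Delta e_j| \le \|\hat\Delta e_i\|_2\,\vertiii{\Sigma^*}_{\mathrm{op}}\,\|\hat\Delta e_j\|_2$,
together with column sparsity $\|\hat\Delta e_j\|_2 \le \sqrt{d}\,\|\hat\Delta\|_\infty$ and the $\vertiii{\hat\Delta}_\infty \le d\|\hat\Delta\|_\infty$ estimate, and convert $\vertiii{\Sigma^*}_{\mathrm{op}}=1/\Lambda_{\min}(\Theta^*)$ via assumption \ref{eig} into a constant multiple of $\Lambda_{\max}(\Theta^*)$; this produces a $d^{3/2}\Lambda_{\max}(\Theta^*)\|\hat\Delta\|_\infty^2$-type bound matching the third piece of \eqref{rem.sp2}. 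Summing $T_1+T_2+T_3$ and dominating by their maximum (up to an absolute constant) then yields both \eqref{rem.sp1} and \eqref{rem.sp2}.

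The hard part is essentially bookkeeping: the challenge is choosing, for each of $T_1,T_2,T_3$, the cheapest matrix-norm splitting so that the final bound is expressed in the sharpest possible combination of $\kappa_{\Gamma^*},\kappa_{\Sigma^*},\Lambda_{\max}(\Theta^*),d$ and $\delta_n$. Only $T_3$ admits a nontrivial choice, and the two alternative controls I described trade a factor $\sqrt d$ against replacing $\kappa_{\Gamma^*}\kappa_{\Sigma^*}$ by $\Lambda_{\max}(\Theta^*)$; this is precisely why the lemma is forced to state two distinct bounds.
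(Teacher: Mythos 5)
Your treatment of Bound I is correct and is essentially the paper's own argument: after invoking Lemma \ref{rates} on $\mathcal T_n$ (support inclusion, hence $\vertiii{\hat\Delta}_\infty\leq d\|\hat\Delta\|_\infty$ for $\hat\Delta:=\hat\Theta-\Theta^*$, and $\|\hat\Delta\|_\infty\lesssim \kappa_{\Gamma^*}\delta_n/\alpha$), your three-term splitting of $\mathrm{rem}$ reproduces all three pieces of \eqref{rem.sp1} (your cubic term even carries one fewer power of $1/\alpha$, which is harmless since $\alpha\leq 1$).

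Bound II, however, has a genuine gap, and it is exactly at the step you identify as the novelty. Your Cauchy--Schwarz control of $T_3=\hat\Delta\Sigma^*\hat\Delta$ gives $|e_i^T\hat\Delta\Sigma^*\hat\Delta e_j|\leq\|\hat\Delta e_i\|_2\,\vertiii{\Sigma^*}_{\mathrm{op}}\,\|\hat\Delta e_j\|_2\leq d\,\vertiii{\Sigma^*}_{\mathrm{op}}\,\|\hat\Delta\|_\infty^2$; the ``$d^{3/2}\Lambda_{\max}(\Theta^*)\|\hat\Delta\|_\infty^2$-type'' bound you announce does not follow from the steps you list, and in any case the decisive defect is the factor $\|\hat\Delta\|_\infty^2$: substituting $\|\hat\Delta\|_\infty\lesssim\kappa_{\Gamma^*}\delta_n/\alpha$ produces a term of order $\kappa_{\Gamma^*}^2\,\Lambda_{\max}(\Theta^*)\,d\,\delta_n^2/\alpha^2$, i.e.\ two powers of $\kappa_{\Gamma^*}$, whereas the third piece of \eqref{rem.sp2} has only one (it is $\kappa_{\Gamma^*}\Lambda_{\max}(\Theta^*)d^{3/2}\delta_n^2/\alpha^2$ overall). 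Since $\kappa_{\Gamma^*}$ may be of order $d$ (Example \ref{ex2}), $\kappa_{\Gamma^*}^2 d$ is not $O(\kappa_{\Gamma^*}d^{3/2})$, so your bound is neither \eqref{rem.sp2} nor dominated by it; it is in effect just Bound I with $\kappa_{\Sigma^*}$ replaced by $\Lambda_{\max}(\Theta^*)$. The paper gets the single power of $\kappa_{\Gamma^*}$ by a mechanism your plan never uses: the KKT conditions give $\hat\Theta\hat\Sigma-I=-\hat\Theta\lambda\hat Z$ with the subgradient bound $\|\lambda\hat Z\|_\infty\leq\lambda=8\delta_n/\alpha$, so the second remainder term is bounded by (a constant times) $\vertiii{\hat\Theta}_1\,\|\lambda\hat Z\|_\infty\,\vertiii{\hat\Delta}_1$, with $\vertiii{\hat\Theta}_1\leq\vertiii{\hat\Delta}_1+\vertiii{\Theta^*}_1\lesssim\kappa_{\Gamma^*}d\delta_n/\alpha+\sqrt d\,\Lambda_{\max}(\Theta^*)$; the $\sqrt d\,\Lambda_{\max}(\Theta^*)$ part times $\lambda$ times $\vertiii{\hat\Delta}_1\lesssim\kappa_{\Gamma^*}d\delta_n/\alpha$ is precisely the $\frac1\alpha\Lambda_{\max}(\Theta^*)d^{3/2}\delta_n^2$ term. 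In other words, Bound II trades one copy of $\hat\Delta$ (cost $\kappa_{\Gamma^*}\delta_n/\alpha$ per entry) for $\lambda\hat Z$ (cost $\delta_n/\alpha$) plus $\vertiii{\Theta^*}_1\leq\sqrt d\,\Lambda_{\max}(\Theta^*)$; a purely norm-based splitting of $\hat\Delta\Sigma^*\hat\Delta$ cannot achieve this. A minor further point: your conversion $\vertiii{\Sigma^*}_{\mathrm{op}}=1/\Lambda_{\min}(\Theta^*)\lesssim\Lambda_{\max}(\Theta^*)$ invokes assumption \ref{eig}, which is not among the hypotheses of this lemma, whereas the paper's route needs only $\vertiii{\Theta^*}_1\leq\sqrt d\,\Lambda_{\max}(\Theta^*)$, which holds unconditionally given row sparsity $d$ of $\Theta^*$.
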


\begin{proof}[Proof of Lemma \ref{res0}]
The KKT conditions for the optimization problem \ref{alg} read
%
\begin{equation}\label{kkt}
\hat\Sigma- \hat\Theta^{-1} + \lambda\hat Z =0,
\end{equation}
where the matrix $\hat Z$ is the sub-differential of $\|.\|_{1,\text
{off}}$ at the optimum $\hat\Theta$.
Multiplying \eqref{kkt} by $\hat\Theta$ from both sides (which is
equivalent to multiplying the vectorized equation \eqref{kkt}
by $\hat\Theta\otimes\hat\Theta$), we obtain
\[
\hat\Theta\hat\Sigma\hat\Theta- \hat\Theta+ \hat\Theta\lambda\hat Z
\hat\Theta=0.
\]
Adding $\hat\Theta-\Theta^*$ to both sides and rearranging gives
%
\begin{equation}\label{nvm2}
\underbrace{\hat\Theta+ \hat\Theta\lambda\hat Z \hat\Theta}_{\hat T}
- \Theta^* = -\Theta^*(\hat\Sigma-\Sigma^*) \Theta^* + \text{rem},
\end{equation}
where, denoting $W:=\hat\Sigma- \Sigma^*$, we have
%
\begin{equation}\label{sal}
\text{rem} :=-(\hat\Theta- \Theta^* )W\Theta^* - (\hat\Theta\hat\Sigma
-I)(\hat\Theta- \Theta^*).
\end{equation}

\smallskip\noindent\underline{Bound I}
\begin{eqnarray*}
\|\text{rem}\|_\infty&\leq& \| (\hat\Theta- \Theta^* )W\Theta^* \|
_\infty+
\|(\hat\Theta\hat\Sigma-I)(\hat\Theta- \Theta^*) \|_\infty
\\
&\leq&
\vertiii{ \hat\Theta- \Theta^* }_\infty\|W\Theta^* \|_\infty
+
\|\hat\Theta\hat\Sigma-I\|_\infty\vertiii{\hat\Theta- \Theta^*
}_\infty
\end{eqnarray*}
We can bound
\begin{eqnarray*}
\|\hat\Sigma\hat\Theta- I\|_\infty
&=& \|(\hat\Sigma-\Sigma^*)(\hat\Theta-\Theta^*) + \Sigma^*(\hat\Theta
-\Theta^*) + (\hat\Sigma-\Sigma^*)\Theta^*\|_\infty
\\
&\leq&
\|\hat\Sigma-\Sigma^*\|_\infty\vertiii{\hat\Theta-\Theta^*}_\infty
+\vertiii{ \Sigma^*}_\infty\|\hat\Theta-\Theta^*\|_\infty + \|W\Theta
^* \|_\infty
\end{eqnarray*}
Hence
\begin{eqnarray*}
\|\text{rem}\|_\infty&\leq&
\underbrace{
\vertiii{ \hat\Theta- \Theta^* }_\infty\|W\Theta^* \|_\infty}_{\text{rem}_1}
+
\underbrace{
\|\hat\Sigma-\Sigma^*\|_\infty\vertiii{\hat\Theta-\Theta^*}^2_\infty
}_{\text{rem}_2}
\\
&&+\underbrace{
\vertiii{ \Sigma^*}_\infty\|\hat\Theta-\Theta^*\|_\infty\vertiii{\hat
\Theta- \Theta^* }_\infty
}_{\text{rem}_3}
+
\underbrace{
\|W\Theta^* \|_\infty\vertiii{\hat\Theta- \Theta^* }_\infty}_{\text{rem}_1}
\end{eqnarray*}
In what follows, condition on the event $\mathcal T_n=\{\|\hat\Sigma
-\Sigma^*\|_\infty\leq\delta_n\}$.
Note that $\vertiii{\Theta^*}_\infty= \max_i \|\Theta_{i}\|_1 \leq
\max_i \|\Theta_{ i}\|_2\sqrt{d} \leq\Lambda_{\max}(\Theta^*)\sqrt{d}$.

By Lemma \ref{rates}, part (\ref{zeros}), on $\mathcal T$ it holds that
$\hat\Theta_{S^c} = \Theta^*_{S^c}$.
Thus $\hat\Theta$ has at most $d$ nonzero entries per row.
Hence it follows
%
\begin{equation}\label{boun2}
\vertiii{\Delta}_\infty= \vertiii{\hat\Theta- \Theta^*}_\infty\leq
d\|\hat\Theta-\Theta^*\|_\infty.
\end{equation}
Next by Lemma \ref{rates}, part (\ref{rate}), we have the bound
\[
\|\hat\Theta- \Theta^*\|_\infty\leq2(1+8/\alpha)\kappa_{\Gamma^*}
\delta_n.
\]
We obtain
\begin{eqnarray*}
\|\text{rem}_1\|_\infty
&\leq&
\|\Theta^* W\|_\infty\vertiii{\Delta}_1
\leq
2(1+8/\alpha)\kappa_{\Gamma^*} {d} \delta_n \|\Theta^* W\|_\infty\\
\|\text{rem}_2\|_\infty&\leq&
\vertiii{\Delta}_1^2 \|W\|_\infty
\leq
4(1+8/\alpha)^2\kappa_{\Gamma^*}^2 d^2\delta^3_n\\
\|\text{rem}_3\|_\infty& \leq&
\|\Delta\|_\infty\vertiii{\Delta}_1 \vertiii{\Sigma^*}_1
\leq
4(1+8/\alpha)^2\kappa_{\Gamma^*}^2\kappa_{\Sigma^*}d \delta^2_n .
\end{eqnarray*}
Hence conditioned on $\mathcal T,$
\begin{eqnarray*}
\|\text{rem}\|_\infty& \leq&
4\max\{
2(1+8/\alpha)\kappa_{\Gamma^*} {d} \delta_n \|\Theta^* W\|_\infty, \\
&& 
4(1+8/\alpha)^2\kappa_{\Gamma^*}^2 d^2\delta^3_n, \\
&& 
4(1+8/\alpha)^2\kappa_{\Gamma^*}^2\kappa_{\Sigma^*} d\delta^2_n\}\\
&=& \mathcal O\left(
\frac{1}{\alpha}\kappa_{\Gamma^*}\max\{
{d} \delta_n \|\Theta^* W\|_\infty,
\frac{1}{\alpha}\kappa_{\Gamma^*} d^2 \delta^3_n,
\frac{1}{\alpha}\kappa_{\Gamma^*}\kappa_{\Sigma^*} d\delta^2_n
\}\right).
\end{eqnarray*}

\smallskip\noindent\underline{Bound II}\\
Observe that
\[
\vertiii{\hat\Theta}_1 \leq \vertiii{\hat\Theta-\Theta^*}_1 +\vertiii
{\Theta^*}_1
\leq 2(1+8/\alpha)\kappa_{\Gamma^*} d \delta_n +\sqrt{d} \Lambda_{\max
}(\Theta^*).
\]
By \eqref{sal} and using the KKT conditions we have
\begin{align*}
\hspace*{1pt}\|\text{rem}\|_\infty={}& \|\Delta W\Theta^* - \hat\Theta(\hat\Sigma
-\hat\Theta^{-1}) \|_\infty
\\
 \leq{}&
\|\Delta W\Theta^* \|_\infty+ \vertiii{\hat\Theta}_1 \|\lambda\hat Z\|
_\infty\|\Delta\|_\infty
\\
\leq{}&
2(1+8/\alpha)\kappa_{\Gamma^*} {d} \delta_n \|\Theta^* W\|_\infty
\\
&{}+
16 \frac{8}{\alpha}4(1+8/\alpha)^2\kappa_{\Gamma^*}^2 d^2 \delta^3_n +
2 \frac{8}{\alpha} (1+8/\alpha)\Lambda_{\max}(\Theta^*)\kappa_{\Gamma
^*} {d}^{3/2} \delta^2_n
\\
={}&\mathcal O\left(
\frac{1}{\alpha} \kappa_{\Gamma^*}
\max\{{d} \delta_n \|\Theta^* W\|_\infty,
\frac{1}{\alpha^2} \kappa_{\Gamma^*} d^2 \delta^3_n,
\frac{1}{\alpha} \Lambda_{\max}(\Theta^*) {d}^{3/2} \delta^2_n
\}
\right)
.\qedhere\hspace*{1pt}
\end{align*}
\end{proof}

\begin{proof}[Proof of Lemma \ref{res1}]

\noindent
(i) Under \ref{subg}, using Lemma \ref{hp} and by assumption \ref{eig}
we have
\begin{eqnarray*}
\|\Theta^* W \|_\infty&\leq& \vertiii{\Theta^*}_{\infty} \|W\|_\infty
=\max_{i=1,\dots,p} \|\Theta^*_i\|_1 \|W\|_\infty
\\
&\leq& \sqrt{d}\|\Theta_i^*\|_2 \|W\|_\infty=\mathcal O_{\mathbb
P}(\sqrt{d\log p/n}).
\end{eqnarray*}
Then by Lemma \ref{res0}, bound II,
\begin{eqnarray*}
\|\text{rem}\|_\infty
&=&\mathcal O_{\mathbb P}\left(
\frac{1}{\alpha} \kappa_{\Gamma^*}
\max\{{d} \delta_n \|\Theta^* W\|_\infty,
\frac{1}{\alpha^2} \kappa_{\Gamma^*} d^2 \delta^3_n,
\frac{1}{\alpha} \Lambda_{\max}(\Theta^*) {d}^{3/2} \delta^2_n
\}
\right)\\
& =&
\mathcal O_{\mathbb P}\left(\frac{1}{\alpha^2}\kappa_{\Gamma^*} \max\{
{d}^{3/2} \log p/n,
\frac{1}{\alpha}\kappa_{\Gamma^*} d^2 (\log p/n)^{3/2}
\}\right)
.
\end{eqnarray*}
(ii) Under \ref{subgv} and \ref{eig}, by Lemma \ref{conc} we have
\[
\|\Theta^* W \|_\infty=\max_{i,j=1,\dots,p} |\Theta^*_i We_j|
=\mathcal O_{\mathbb P}(\sqrt{\log p/n}).
\]
Hence using Lemma \ref{res0}, bound I and Lemma \ref{hp} we have
\begin{align*}
\|\text{rem}\|_\infty&= \mathcal O_{\mathbb P}\left(
\frac{1}{\alpha}\kappa_{\Gamma^*}\max\{
{d} \log p/n ,
\frac{1}{\alpha}\kappa_{\Gamma^*} d^2 (\log p/n)^{3/2},
\frac{\kappa_{\Gamma^*}\kappa_{\Sigma^*}}{\alpha} d\log p/n
\} \right)\\
&\stackrel{(a)}{=}
\mathcal O_{\mathbb P}\left(\frac{1}{\alpha^2}\kappa^2_{\Gamma^*}
\kappa_{\Sigma^*} d\log p/n \right).
\end{align*}
Step (a) above follows by the sparsity assumption \eqref
{sparsity.ravikumar} and since $\kappa_{\Gamma^*} \gtrsim1$. The
latter statement follows by \ref{eig}, using that $\Lambda_{\min
}((\Gamma^*_{SS})^{-1}) = 1/\Lambda_{\max}(\Gamma^*_{SS})\geq
1/\Lambda_{\max}(\Gamma^*)$ and that the eigenvalues of Kronecker
product satisfy $\Lambda_{\max}(\Sigma\otimes\Sigma)=\Lambda^2_{\max
}(\Sigma)$.
\end{proof}

\begin{proof}[Proof of Theorem \ref{res2}]
By Lemma \ref{res1}, under the sub-Gaussianity condition \ref{subg} or
\ref{subgv}, under the appropriate sparsity assumptions (combining
\eqref{sparsity.ravikumar} and \eqref{rem.sp1}, \eqref{rem.sp2})
it holds that $\|\text{rem}\|_\infty= o_\mathbb P(1).$
Hence for every $(i,j)$ it holds that
%
\begin{eqnarray}\label{elem}
\sqrt{n}(\hat T_{ij}-\Theta^*_{ij})
=\frac{1}{\sqrt{n}}\sum_{k=1}^n ({\Theta^*_{ i}}^T X_{k} {\Theta^*_{
j}} ^T X_{k} -\Theta^*_{ij})+ o_{\mathbb P}(1).
\end{eqnarray}
It remains to prove that the scaled summation term in (\ref{elem})
weakly converges to the normal distribution.
To this end, define
\[
Z_{ij,k}:= {\Theta^*_{i}}^T X_{k} {\Theta^*_{ j}} ^T X_{k}-\Theta^*_{ij}.
\]
For each $n$ fixed, $Z_{ij,1},\dots,Z_{ij,n}$ are identically
distributed r.v.s with mean $\mathbb E(Z_{ij,k})= (\Theta^*_{ i})^T
\Sigma^* \Theta^*_{ j} -\Theta_{ij}= e_i^T \Theta^*\Sigma^*\Theta^* e_j
- \Theta^*_{ij} = 0$.

Since each $X_{k}$ has sub-Gaussian elements, the variance
$\sigma_{ij}^2=\text{Var}(Z_{ij,k})= \text{Var} ({\Theta^*_{ i}}^T
X_{k} {\Theta^*_{ j}} ^T X_{k})$ is finite.
Denote $S_n = \sum_{k=1}^n Z_{ij,k}$. Then $s_n:=\text{var}(S_n) =
n\sigma^2_{ij}.$
Dividing (\ref{elem}) by $\sigma_{ij}>0,$ we obtain
\[
\sqrt{n}{(\hat T_{ij}-\Theta^*_{ij})}/{\sigma_{ij}} =
S_n/s_n
+
{ o_{\mathbb P}(1)}/{\sigma_{ij}}.
\]
First note that by the assumption $1/\sigma_{ij}=\mathcal O(1)$ we have
$ o_{\mathbb P}(1)/\sigma_{ij} = o_{\mathbb P}(1).$

\smallskip\noindent(i) Sub-Gaussian design \ref{subg}

To show $S_n/s_n\rightsquigarrow\mathcal N(0,1)$, we check the
Lindeberg condition, i.e. for all $\varepsilon>0$
\[
\lim_{n\rightarrow\infty} \frac{1}{s_n^2}
\sum_{k=1}^n \mathbb E(Z_{ij,k}^2\mathbf1(|Z_{ij,k}|>\varepsilon s_n))=0.
\]

\smallskip
Observe that since $\{Z_{ij,k}\}_{k=1}^n$ are identically distributed
for $n$ fixed and $s_n^2 = n\sigma_{ij}^2$,
\begin{eqnarray*}
\sum_{k=1}^n \frac{1}{s_n^2}\mathbb E(Z_{ij,k}^2\mathbf
1(|Z_{ij,k}|>\varepsilon s_n))
=
\frac{1}{\sigma_{ij}^2} \mathbb E(Z_{ij,1}^2 \mathbf
1(|Z_{ij,1}|>\varepsilon s_n)).
\end{eqnarray*}
Consequently, it remains to show that $\lim_{n\rightarrow\infty}
\mathbb E(Z_{ij,1}^2 \mathbf1(|Z_{ij,1}|>\varepsilon s_n)) = 0.$
For any $c>0$ we may rewrite
\[
c = \int_{0}^\infty\mathbf1_{c >t}dt.
\]
Therefore, applying the last observation and by Fubini's theorem we obtain
\begin{eqnarray*}
\int_{\Omega} |X|^2\mathbf1_{|X|>a} d\mathbb P
= a^2 \mathbb P( |X|>a) + 2\int_{a}^\infty u \mathbb P( |X|> u ) du.
\end{eqnarray*}
Then it follows
\begin{eqnarray*}
\label{perpartes}
\mathbb E(Z_{ij,1}^2 \mathbf1(|Z_{ij,1}|>\varepsilon\sigma_{ij} \sqrt{n}))
& \leq&
\varepsilon^2 \sigma_{ij}^2 {n} \mathbb P(|Z_{ij,1}|>\varepsilon\sigma
_{ij} \sqrt{n}) \\
&& + \int_{\varepsilon\sigma_{ij} \sqrt{n}}^\infty x \mathbb P(|Z_{ij,1}|>x)dx.
\end{eqnarray*}
To show that the limit of the right-hand side of the last inequality is
$0$ for $n\rightarrow\infty$, we use Lemma
\ref{subexp}, by which it follows that $Z_{ij,1}$ satisfies a tail
bound $\mathbb P(|Z_{ij,1}|>t)\leq4 de^{-\frac{t}{c_1 d}}$.
For a fixed $\varepsilon>0,$ putting $t:= \varepsilon\sigma_{ij} \sqrt
{n},$ we obtain
\[
\mathbb P(|Z_{ij,1}| > \varepsilon\sigma_{ij} \sqrt{n}) \leq4de^{-\frac
{\varepsilon\sigma_{ij} \sqrt{n}}{
c_1 d}
}.
\]
Consequently, for the first term in \eqref{perpartes} we have
%
\begin{equation}\label{first_lim}
\lim_{n\rightarrow\infty} \sigma_{ij}^2 {n} P(|Z_{ij,1}|>\varepsilon
\sigma_{ij} \sqrt{n}) \leq\lim_{n\rightarrow\infty}\sigma_{ij}^2 {n}d
e^{-\frac{\varepsilon\sigma_{ij} \sqrt{n}}{
c_1 d}}=0,
\end{equation}
which follows by the sparsity assumption \eqref{sp1} that implies
$d^{\frac{3}{2}} = o({\sqrt{n}}/{\log p}).$
Next considering the limit of the last term in (\ref{perpartes}), we have
\[
\int_{\varepsilon\sigma_{ij} \sqrt{n}}^\infty x P(|Z_{ij,1}|>x)dx \leq
\int_{\varepsilon\sigma_{ij} \sqrt{n}}^\infty4 x d e^{-\frac{x}{c_1 d}}dx.
\]
In the integral, substitute $t:= \frac{x}{\sigma_{ij}\sqrt{n}}$ to obtain
\begin{eqnarray*}
\lim_{n\rightarrow\infty}\int_{\sigma_{ij} \sqrt{n}}^\infty dx
e^{-\frac{x}{d}}dx =
\lim_{n\rightarrow\infty} \int_{1}^\infty d \sigma_{ij}^2 n t e^{-\frac
{\sigma_{ij} \sqrt{n}}{d}t}dt.
\end{eqnarray*}
Again by the restriction on $d$ and the Lebesgue dominated convergence
it then follows that the limit of the integral is $0$.
In conclusion, we get ${S_n}/{s_n}
\rightsquigarrow\mathcal N(0,1)$ for $n\rightarrow\infty$.

\smallskip\noindent(ii) Sub-Gaussian design \ref{subgv}

Under \ref{subgv}, we have a bound $\mathbb P(|Z_{ij,1}| > t) \lesssim
e^{-t/(c_2 K^2)}$
hence similarly as in (i), asymptotic normality follows.
\end{proof}

\begin{lema} \label{subexp}
Let $\Theta^*$ satisfy assumption \ref{eig} and let the random vector
$X\in\mathbb R^p$ satisfy the sub-Gaussianity condition
\ref{subg} with $K=\mathcal O(1)$. Then for $t >c_0$ the random variable
$Z:= {\Theta^*_{ i}}^T X_{} {\Theta^*_{ j}} ^T X_{}-\Theta^*_{ij}$
satisfies the following bound
\[
\mathbb P(|Z_{}| > t) \leq4de^{{-\frac{t}{c_1 d}}},
\]
where $c_0,c_1$ do not depend on $n.$
\end{lema}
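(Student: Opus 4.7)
The plan is to prove a sub-exponential tail bound for $Z = UV - \mathbb E[UV]$, where $U := (\Theta^*_i)^T X$ and $V := (\Theta^*_j)^T X$ (note that $\mathbb E[UV] = e_i^T \Theta^* \Sigma^* \Theta^* e_j = \Theta^*_{ij}$, so $Z$ is indeed the centered product). If I can show that $U$ and $V$ are each sub-Gaussian with $\psi_2$ Orlicz norm of order $\sqrt{d}$, then their centered product is sub-exponential with $\psi_1$ norm of order $d$, which yields $\mathbb P(|Z| > t) \leq 2 e^{-t/(c_1 d)}$ for $t$ above a universal threshold. Since $4d \geq 2$, this immediately implies the stated inequality.

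First I would control $\|U\|_{\psi_2}$ by the Orlicz triangle inequality,
\[
\|U\|_{\psi_2} \;\leq\; \sum_{k: \Theta^*_{ik}\neq 0} |\Theta^*_{ik}|\,\|X^k\|_{\psi_2}.
\]
Condition \ref{subg} with $K=\mathcal O(1)$ gives $\|X^k\|_{\psi_2}\lesssim\sqrt{\Sigma^*_{kk}}$, and assumption \ref{eig} bounds $\Sigma^*_{kk}\leq \Lambda_{\max}(\Sigma^*)=1/\Lambda_{\min}(\Theta^*)\leq L$, so $\|X^k\|_{\psi_2}$ is a universal constant. Meanwhile $|\Theta^*_{ik}| \leq \Lambda_{\max}(\Theta^*) \leq L$ for every entry, and since $\Theta^*_i$ has at most $d$ nonzeros, Cauchy--Schwarz gives $\|\Theta^*_i\|_1 \leq \sqrt{d}\,\|\Theta^*_i\|_2 \leq \sqrt{d}\,L$. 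Combining, $\|U\|_{\psi_2} = \mathcal O(\sqrt d)$, and the same argument applies to $V$.

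Second, I would invoke the standard product bound $\|UV\|_{\psi_1} \leq 2\|U\|_{\psi_2}\|V\|_{\psi_2}$ (which follows from $|uv|\leq (u^2+v^2)/2$ or Cauchy--Schwarz on the defining integrals). After centering, $\|Z\|_{\psi_1} \leq 2\|UV\|_{\psi_1} = \mathcal O(d)$, and the defining tail bound of the $\psi_1$ norm gives $\mathbb P(|Z| > t) \leq 2\exp(-t/(c_1 d))$ for $t > c_0$, completing the proof.

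The main obstacle is conceptual rather than technical: condition \ref{subg} only provides \emph{coordinate-wise} sub-Gaussianity, with no joint structure on $X$. Hence $U = \sum_k \Theta^*_{ik}X^k$ cannot be bounded via the tighter $\ell_2$-type concentration available under \ref{subgv} or independence; only the $\ell_1$-based Orlicz triangle inequality is at our disposal. This is precisely what forces $\|U\|_{\psi_2}$ to scale like $\sqrt{d}$ instead of $\mathcal O(1)$, and ultimately produces the factor $d$ in the denominator of the exponent. The only remaining work is bookkeeping: tracking the constants in $L$ and $K$ so that they collapse into the absolute constants $c_0, c_1$.
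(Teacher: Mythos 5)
Your proof is correct, and it reaches a conclusion that is in fact slightly stronger than the lemma (prefactor $2$ instead of $4d$, valid for all $t>0$). The underlying idea coincides with the paper's: since \ref{subg} is only coordinate-wise, the linear forms $U=(\Theta^*_i)^TX$ and $V=(\Theta^*_j)^TX$ can only be controlled through $\|\Theta^*_i\|_1\leq\sqrt d\,\|\Theta^*_i\|_2\leq\sqrt d\,\Lambda_{\max}(\Theta^*)$, which is exactly what produces the $d$ in the exponent. The execution differs, though. The paper bounds $|U|\leq\|\Theta^*_i\|_1\max_{k:\Theta^*_{ki}\neq0}|X^k|$ and applies a union bound over the at most $d$ nonzero coordinates, giving a sub-Gaussian tail with prefactor $2d$; it then passes to the product via the event inclusion $\{|UV|>t\}\subseteq\{|U|>\sqrt t\}\cup\{|V|>\sqrt t\}$ (prefactor $4d$) and absorbs the centering term $|\Theta^*_{ij}|$ by restricting to $t>c_0$. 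You instead run the whole argument through Orlicz calculus: the $\psi_2$-triangle inequality gives $\|U\|_{\psi_2}=\mathcal O(\sqrt d)$ with no union-bound prefactor, the $\psi_2$-product lemma gives $\|UV\|_{\psi_1}=\mathcal O(d)$, and Markov's inequality on $e^{|Z|/c}$ gives the tail directly. What your route buys is cleanliness: no factor $d$ in front of the exponential, no threshold $c_0$ needed, and standard reusable lemmas; what the paper's route buys is a fully elementary, self-contained argument with constants written explicitly in terms of $K$ and $\max_k\Sigma^*_{kk}$. One small imprecision: $\|Z\|_{\psi_1}\leq 2\|UV\|_{\psi_1}$ is not exact under the usual convention (centering costs $\|\mathbb E UV\|_{\psi_1}=|\Theta^*_{ij}|/\log 2$ with $|\mathbb E UV|\leq\|UV\|_{\psi_1}$, so the factor is $1+1/\log 2$), but this only changes the absolute constant and does not affect the conclusion.
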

\begin{proof}
Since $|{\Theta^*_{ i}}^T X_{} | \leq\|\Theta_{ i}\|_1\max_{k:\Theta
^*_{ki}\not=0} |{X_{}^k}| \leq\|\Theta^*_{ i}\|_2 \sqrt{d} \max
_{k:\Theta^*_{ki}\not=0} |{X_{}^k}|,$
and using the union bound and sub-Gaussianity of $X_{n}^{k}/\sqrt{\Sigma
_{kk}^*}$ by assumption \ref{subg} gives
\begin{eqnarray*}
\mathbb P(|{\Theta^*_{ i}} ^T X_{}| > t)
& \leq&
\mathbb P\left( \max_{k:\Theta^*_{ki}\not=0} |X^k| > \frac{{t}}{\sqrt
{d}\|\Theta^*_{ i}\|_2 }\right) \\
& \leq&
d \max_{k:\Theta^*_{ki}\not=0} \mathbb P\left( |X^k|/\sqrt{\Sigma
_{kk}^*} > \frac{{t}}{\sqrt{d}\|\Theta^*_{ i}\|_2 \sqrt{\Sigma
_{kk}^*}}\right)
\\
&\leq&
2d
\exp\left( {-\frac{{t^2}}{6K^2 d\|\Theta^*_{ i}\|^2_2 \max_{k}{\Sigma
_{kk}^*}} }\right)
.
\end{eqnarray*}
Then for ${\Theta^*_{ i}} ^T X_{} {\Theta^*_{ j}}^T X_{}$ we obtain the bound
\[
\mathbb P(|{\Theta^*_{ i}} ^T X_{} {\Theta^*_{ j}}^T X_{}| > t)
\leq
4d \exp\left( {-\frac{{t}}{6K^2 d\Lambda^2_{\max} (\Theta^*) \max
_{k}{\Sigma_{kk}^*}} }\right)
.
\]
Under \ref{eig},
$\Sigma^*_{kk}$ and $\|\Theta^*_{ i}\|_2 \leq\Lambda_{\max}(\Theta^*)$
are uniformly bounded in $n$.
Thus for $Z_{} = {\Theta^*_{ i}} ^T X_{} {\Theta^*_{ j}}^T X_{}- \Theta
^*_{ij},$ there exist constants $c_0,c_1,c_2$ not depending on $n$ such
that for $t>c_0 > |\Theta^*_{ij}|$
\[
\mathbb P(|Z_{}| > t)
\leq
4de^{{-\frac{t-|\Theta^*_{ij}|}{c_2d}}}\leq4de^{{-\frac{t}{c_1 d}}},
\]
since ${|\Theta^*_{ij}|}/{d}$ is bounded by \ref{eig} and $d\geq1$.
\end{proof}

\begin{proof}[Proof of Lemma \ref{var}]
Since $X\sim\mathcal N(0,\Sigma^*),$ then ${\Theta^*} X \sim\mathcal
N(0,\Theta^*),$
hence
\[
\sigma_{ij}^2 = \text{Var}({\Theta^*_{ i}}^T X{\Theta^*_{ j}}^T X)
= \text{Var}(e_i^T{\Theta^*}^T XX^T{\Theta^*}e_j)=
\text{Var}(e_i^T ZZ^Te_j),
\]
where $Z\sim\mathcal N(0,\Theta^*).$
Thus
\begin{eqnarray*}
\sigma_{ij}^2 &=& \text{Var}(Z^iZ^j)=\mathbb E((Z^i)^2(Z^j)^2)-\mathbb
E(Z^iZ^j)^2
\\
&=&\Theta^*_{ii}\Theta^*_{jj}+2{\Theta^*_{ij}}^2-{\Theta^*_{ij}}^2
= \Theta^*_{ii}\Theta^*_{jj}+{\Theta^*_{ij}}^2.
\end{eqnarray*}
By assumption \ref{eig}, 
${\Theta^*_{ii}\Theta^*_{jj}+{\Theta^*}_{ij}^2} \geq\Lambda^2_{\min
}(\Theta^*) \geq\frac{1}{L^2}>0$, where $L\asymp1$, therefore
$1/\sigma_{ij}=\mathcal O(1)$.

By Lemma \ref{rates} and Lemma \ref{hp} we have
%
\begin{eqnarray}\nonumber
|\hat\sigma_{ij}^2 - \sigma_{ij}^2|
& \leq&
|\hat\Theta_{ii}\hat\Theta_{jj}- \Theta^*_{ii}\Theta^*_{jj}| +
|\hat\Theta_{ij}^2 - {\Theta^*}_{ij}^2 | \\\nonumber
&\leq& |\Delta_{ii}\Delta_{jj} + \Theta^*_{ii}\Delta_{jj}+\Theta
^*_{jj}\Delta_{ii}| \\\nonumber
&&\;+\;\;
|\Delta_{ij}(\Delta_{ij} + 2\Theta^*_{ij})|
\\\label{cons}
&=&\mathcal O_{\mathbb P}\left(
1/\alpha\kappa_{\Gamma^*}\sqrt{\frac{\log p}{n}}
\right)
,
\end{eqnarray}
where we used assumption \ref{eig} and the sparsity assumption \eqref
{sparsity.ravikumar}.
\end{proof}

\subsection*{Concentration for sub-Gaussian design \ref{subgv}}
\label{sec:concentration}
\begin{lema}\label{bilinear}
Let $\alpha,\beta\in\mathbb R^p$ such that $\|\alpha\|_2 \leq M,\|\beta
\|_2\leq M.$
Let $X_k\in\mathbb R^p$ satisfy the sub-Gaussianity assumption \ref{subgv}
with a constant $K>0.$
Then for $m\geq2,$
\[
\mathbb E|\alpha^T X_k X_k^T\beta- \mathbb E \alpha^T X_k X_k^T\beta
|^m / (2 M^2K^2)^{m} \leq
\frac{m!}{2}.
\]
\end{lema}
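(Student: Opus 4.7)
The plan is to show that $Z := (\alpha^T X_k)(\beta^T X_k)$ is sub-exponential because it is a product of two sub-Gaussian random variables, and then to derive the claimed Bernstein-type moment bound by a centering argument. Let $U := \alpha^T X_k$ and $V := \beta^T X_k$.

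First, since $\|\alpha\|_2 \le M$ and $\|\beta\|_2 \le M$, I would apply \ref{subgv} to the unit vectors $\alpha/M$ and $\beta/M$ to obtain the marginal sub-Gaussian bounds $\mathbb E \exp(U^2/(M^2K^2)) \le 2$ and $\mathbb E \exp(V^2/(M^2K^2)) \le 2$. Second, I would combine these via AM--GM, $|UV| \le (U^2+V^2)/2$, together with Cauchy--Schwarz on expectations:
\[
\mathbb E \exp\!\bigl(|UV|/(M^2K^2)\bigr) \le \mathbb E \exp\!\bigl((U^2+V^2)/(2M^2K^2)\bigr) \le \bigl(\mathbb E e^{U^2/(M^2K^2)}\, \mathbb E e^{V^2/(M^2K^2)}\bigr)^{1/2} \le 2.
\]
Reading off the Taylor coefficients on the left yields the key moment bound $\mathbb E |UV|^m \le m!\,(M^2K^2)^m$ for every $m \ge 1$; in particular the $m=1$ bound gives $\mathbb E|UV| \le M^2K^2$.

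Third, for the centering step, I would use $|UV - \mathbb E UV|^m \le (|UV| + \mathbb E|UV|)^m$, expand binomially, and substitute the two bounds from the previous step:
\[
\mathbb E|UV - \mathbb E UV|^m \le \sum_{k=0}^m \binom{m}{k} \mathbb E|UV|^k (\mathbb E|UV|)^{m-k} \le m!\,(M^2K^2)^m \sum_{j=0}^m \frac{1}{j!} \le e\, m!\,(M^2K^2)^m .
\]
For $m \ge 3$ the right-hand side is bounded by $m!\,2^{m-1}(M^2K^2)^m = \tfrac{m!}{2}(2M^2K^2)^m$, since $e \le 4 \le 2^{m-1}$, which is exactly the claim. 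For $m = 2$ the claim is immediate from the variance bound $\mathrm{Var}(UV) \le \mathbb E(UV)^2 \le 2(M^2K^2)^2 \le 4(M^2K^2)^2 = \tfrac{2!}{2}(2M^2K^2)^2$.

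The only nontrivial step is the second one: the AM--GM plus Cauchy--Schwarz maneuver that upgrades two marginal sub-Gaussian bounds into a genuine sub-exponential tail bound on the product $UV$. The rescaling in the first step and the binomial/centering bookkeeping in the third step are then routine.
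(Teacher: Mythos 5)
Your proposal is correct and follows essentially the same route as the paper: rescale to unit vectors via \ref{subgv}, use $|UV|\le (U^2+V^2)/2$ together with Cauchy--Schwarz to get $\mathbb E e^{|UV|/(M^2K^2)}\le 2$, and read off the moment bounds from the Taylor expansion. The only (minor) divergence is the centering step, where the paper invokes the inequality $\mathbb E|Y-\mathbb E Y|^m\le 2^{m-1}\mathbb E|Y|^m$ directly while you expand binomially, pick up a factor $e$, and absorb it for $m\ge 3$ (treating $m=2$ separately); both routes give the stated constant.
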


Consequently, we may apply Bernstein inequality (Lemma 14.9 in \cite{hds}).
\begin{lema}
Let $\alpha,\beta\in\mathbb R^p$ such that $\|\alpha\|_2 \leq M,\|\beta
\|_2\leq M.$
Let $X_k\in\mathbb R^p$ satisfy the sub-Gaussianity assumption \ref{subgv}
with a constant $ K>0.$
For all $t>0$
\[
\mathbb P \left( |\alpha^T \hat\Sigma\beta- \alpha^T \Sigma\beta| /(2M^2K^2)
>
t + \sqrt{2t}
\right) \leq2e^{-nt}.
\]
\end{lema}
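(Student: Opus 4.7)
The approach is straightforward: recognize the claim as the classical Bernstein-type sub-exponential tail inequality applied to the i.i.d.\ mean-zero decomposition
\[
\alpha^T \hat\Sigma \beta - \alpha^T \Sigma \beta = \frac{1}{n} \sum_{k=1}^n Y_k, \qquad Y_k := \alpha^T X_k X_k^T \beta - \mathbb{E}[\alpha^T X_k X_k^T \beta].
\]
Lemma \ref{bilinear} has already done the non-trivial work of translating the sub-Gaussianity assumption \ref{subgv} into the Bernstein-type moment bound $\mathbb{E}|Y_k|^m \leq (m!/2)\,(2M^2K^2)^m$ valid for all $m \geq 2$, so what remains is essentially bookkeeping.

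The plan is to rescale $\widetilde Y_k := Y_k/(2M^2K^2)$ so that $\mathbb{E}|\widetilde Y_k|^m \leq m!/2$; this matches exactly the hypothesis of Lemma 14.9 in \cite{hds} with scale parameter $R=1$ and variance proxy $\sigma^2 = 1$. Applying that lemma to the i.i.d.\ sequence $(\widetilde Y_k)_{k=1}^n$ yields
\[
\mathbb{P}\left( \left| \frac{1}{n}\sum_{k=1}^n \widetilde Y_k \right| > \sqrt{\frac{2u}{n}} + \frac{u}{n} \right) \leq 2 e^{-u}, \qquad u>0.
\]
Finally, I would set $u = nt$, which turns the exponential into $e^{-nt}$ and the tail into $\sqrt{2t} + t$, and unwind the definition of $\widetilde Y_k$ to recover the stated bound.

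There is essentially no genuine obstacle at this stage: the substantive probabilistic content is entirely contained in Lemma \ref{bilinear} and in the off-the-shelf Bernstein inequality being invoked. The only care required is constant bookkeeping, namely confirming that the rescaling by $2M^2K^2$ combined with the substitution $u = nt$ produces precisely the form $t + \sqrt{2t}$ together with the factor $2e^{-nt}$ on the right-hand side, exactly as written in the statement.
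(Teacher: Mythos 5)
Your proposal is correct and follows exactly the route the paper takes: the paper states this lemma as an immediate consequence of Lemma \ref{bilinear} (which supplies the Bernstein-type moment bound $\mathbb E|Y_k|^m/(2M^2K^2)^m \le m!/2$) combined with Bernstein's inequality, Lemma 14.9 in \cite{hds}. Your rescaling and the substitution $u=nt$ are just the constant bookkeeping the paper leaves implicit, so there is nothing missing.
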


\begin{lema} \label{conc}
Assume $\|\alpha_i\|_2 \leq M,\|\beta\|_2\leq M$ for all $i=1,\dots,p$
and \ref{subgv} with~$K$.
For all $t>0$ it holds
\[
\mathbb P
\Biggl( \max_{i=1,\dots,p} |\alpha_i^T(\hat\Sigma-\Sigma) \beta | / (2M^2K^2)
>
t + \sqrt{2t} + \sqrt{\frac{2\log(2p)}{n}} +\frac{\log(2p)}{n}
\Biggr) \leq e^{-nt}.
\]
\end{lema}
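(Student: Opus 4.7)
The plan is straightforward: derive the maximal inequality from the preceding single-index Bernstein bound via a union bound over $i=1,\dots,p$, with the correct choice of deviation level so that the union bound factor $p$ gets absorbed into the exponent.

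I would begin with the previous lemma, which states that for any fixed $\alpha_i,\beta$ with $\ell_2$-norms bounded by $M$,
\[
\mathbb P\bigl(|\alpha_i^T(\hat\Sigma-\Sigma)\beta|/(2M^2K^2) > s + \sqrt{2s}\bigr) \leq 2e^{-ns}
\]
for all $s > 0$. The next step is to substitute $s = t + \log(2p)/n$, so that the right-hand side becomes $2e^{-nt - \log(2p)} = e^{-nt}/p$. The deviation level on the left then becomes
\[
t + \frac{\log(2p)}{n} + \sqrt{2\bigl(t + \log(2p)/n\bigr)}.
\]
Applying subadditivity of the square root, $\sqrt{a+b} \leq \sqrt{a}+\sqrt{b}$ for $a,b\geq 0$, I can bound the square-root term by $\sqrt{2t} + \sqrt{2\log(2p)/n}$, so the overall threshold is at most $t + \sqrt{2t} + \sqrt{2\log(2p)/n} + \log(2p)/n$.

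Finally, applying the union bound over $i=1,\dots,p$ gives
\[
\mathbb P\!\left(\max_{i\leq p} |\alpha_i^T(\hat\Sigma-\Sigma)\beta|/(2M^2K^2) > t + \sqrt{2t} + \sqrt{2\log(2p)/n} + \log(2p)/n\right) \leq p \cdot \frac{e^{-nt}}{p} = e^{-nt},
\]
which is exactly the claim. There is no serious obstacle here; the only point requiring care is the bookkeeping for the factor $2$ inside $\log(2p)$ (so that $p \cdot 2 e^{-ns}$ matches $e^{-nt}$ after substitution) and the use of $\sqrt{a+b}\leq\sqrt{a}+\sqrt{b}$ to linearize the dependence on $t$ and on $\log(2p)/n$ inside the square root.
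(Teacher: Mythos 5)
Your proof is correct and follows essentially the same route as the paper: the paper verifies the Bernstein moment condition via Lemma \ref{bilinear} and then simply cites Lemma 14.13 of \cite{hds}, which is exactly the maximal inequality you re-derive by substituting $s = t + \log(2p)/n$ into the single-pair Bernstein bound, using $\sqrt{a+b}\le\sqrt{a}+\sqrt{b}$, and taking a union bound over $i=1,\dots,p$. Your bookkeeping is right (the factor $2e^{-ns}$ becomes $e^{-nt}/p$, which the union bound turns into $e^{-nt}$), so nothing is missing.
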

\begin{proof}
By Lemma \ref{bilinear} we have
\[
\mathbb E|\alpha^T X_k X_k^T\beta- \mathbb E \alpha^T X_k X_k^T\beta
|^m / (2M^2K^2)^{m} \leq
\frac{m!}{2} .
\]
Lemma 14.13 in \cite{hds} gives the claim.
\end{proof}

\begin{proof}[Proof of Lemma \ref{bilinear}]
By assumption we have $\|\alpha\|_2 \leq M,\|\beta\|_2\leq M.$
Consequently, by the sub-Gaussianity assumption with a constant $K$ we obtain
\[
\mathbb E e^{|X_k^T\alpha|^2/(MK)^2} \leq2
\]
and likewise
\[
\mathbb E e^{|X_k^T \beta|^2/(MK)^2} \leq2.
\]
Next by the inequality $ab\leq a^2/2+b^2/2$ (for any $a,b\in\mathbb R$)
and by the Cauchy-Schwarz inequality we obtain
\begin{eqnarray*}
\mathbb E e^{|\alpha^T X_k X_k^T \beta|/(MK)^2}
&\leq&
\mathbb E e^{| X_k^T \alpha|^2/(MK)^2/2} e^{| X_k^T\beta|^2/(MK)^2/2}
\\
&\leq&
\{\mathbb E e^{| X_k^T \alpha|^2/(MK)^2}\}^{1/2} \{\mathbb Ee^{|
X_k^T\beta|^2/(MK)^2}\}^{1/2}
\\
&\leq& 2.
\end{eqnarray*}
By the Taylor expansion, we have the inequality
\[
1 + \frac{1}{m!}\mathbb E|\alpha^T X_k X_k^T\beta|^m / (MK)^{2m} \leq
\mathbb E e^{|\alpha^T X_k X_k^T \beta|/(MK)^2}
\]
Next it follows
\begin{align*}
\mathbb E|\alpha^T X_k X_k^T\beta- \mathbb E \alpha^T X_k X_k^T\beta
|^m / (MK)^{2m}
& \leq
2^{m-1} \mathbb E|\alpha^T X_k X_k^T\beta|^m / (MK)^{2m} \\
& \leq 2^{m-1} m! (\mathbb Ee^{|\alpha^T X_k X_k^T\beta|/(MK)^{2}}
-1 )\\
&= 2^{m-1}m!
= \frac{m!}{2} 2^{m}.
\end{align*}
And thus
\[
\mathbb E|\alpha^T X_k X_k^T\beta- \mathbb E \alpha^T X_k X_k^T\beta
|^m / (2 M^2K^2)^{m} \leq
\frac{m!}{2} .\qedhere
\]
\end{proof}

\appendix

\section{Tail bounds and rates of convergence}\label{subsec:tails}
The following Lemma from \cite{ravikumar} gives probabilistic bounds
for the event $\mathcal T_n$ if $X$ satisfies
\ref{subg}.

\begin{lema}[\cite{ravikumar}, sub-Gaussian model]\label{hp}
Let $X=(X^1,\dots,X^p)$ be independent, distributed as $X$ with
$\mathbb EX =0,$ $\text{cov}(X) = \Sigma^* = (\Theta^*)^{-1}$
and satisfying \ref{subg} with $K>0$.
Then for
\[
\delta_\tau(n,r) = 8(1+12K^2)\max_i\Sigma^*_{ii} \sqrt{2\frac{\log(4r)}{n}},
\]
and for every $\gamma>2$ and each $n$ such that
$\delta_\tau(n,p^\gamma) < 8(1+12K^2)\max_i\Sigma^*_{ii}$ 
we have
\[
\mathbb P\left(\|\hat\Sigma- \Sigma_{}^*\|_\infty\geq
\delta_\tau(n,p^\gamma)
\right) \leq\frac{1}{p^{\gamma-2}}.
\]
\end{lema}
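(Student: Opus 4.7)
The plan is to establish the concentration inequality entry-by-entry and then apply a union bound over the $p^2$ pairs $(i,j) \in \mathcal V \times \mathcal V$. Writing
\[
\|\hat\Sigma - \Sigma^*\|_\infty = \max_{i,j} \Bigl|\tfrac{1}{n}\sum_{k=1}^n \bigl(X_k^i X_k^j - \mathbb E X^i X^j\bigr)\Bigr|,
\]
the core task is to obtain a Bernstein-type tail bound for each summand and then calibrate $t = \delta_\tau(n,p^\gamma)$ so that the union bound gives $p^{-(\gamma-2)}$.

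First, I would normalize: writing $Z^i := X^i/\sqrt{\Sigma_{ii}^*}$, Condition \ref{subg} gives $\mathbb E e^{(Z^i)^2/K^2} \leq 2$ for each $i$. Using the pointwise inequality $|Z^i Z^j| \leq \tfrac{1}{2}((Z^i)^2 + (Z^j)^2)$ together with Cauchy--Schwarz yields
\[
\mathbb E e^{|Z^i Z^j|/K^2} \leq \bigl(\mathbb E e^{(Z^i)^2/K^2}\bigr)^{1/2}\bigl(\mathbb E e^{(Z^j)^2/K^2}\bigr)^{1/2} \leq 2.
\]
By Taylor expanding the exponential, this produces moment bounds of the form $\mathbb E|Z^i Z^j|^m \leq m! \, K^{2m}$ for $m\geq 1$. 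After centering, I would show (up to a factor of $2$) analogous Bernstein-type moment conditions for $Y_k^{(i,j)} := X_k^i X_k^j - \Sigma^*_{ij}$, scaling by $\Sigma_{ii}^*\Sigma_{jj}^* \leq (\max_i \Sigma^*_{ii})^2$ to convert back from the normalized $Z$-variables.

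Second, with these moment bounds in hand, classical Bernstein's inequality for i.i.d.\ sub-exponential variables gives
\[
\mathbb P\Bigl(\bigl|\hat\Sigma_{ij} - \Sigma^*_{ij}\bigr| \geq t\Bigr) \leq 2\exp\Bigl(-\frac{nt^2}{c\,K^4 (\max_i \Sigma_{ii}^*)^2 + c'\,K^2(\max_i \Sigma_{ii}^*)\,t}\Bigr)
\]
for absolute constants $c,c'$. The hypothesis $\delta_\tau(n,p^\gamma) < 8(1+12K^2)\max_i \Sigma_{ii}^*$ ensures that $t := \delta_\tau(n,p^\gamma)$ lies in the Gaussian (small-deviation) regime of Bernstein, so the denominator is dominated by the variance term and the bound simplifies to roughly $2\exp(-c'' nt^2 / [K^4(\max_i\Sigma_{ii}^*)^2])$. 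The constant $8(1+12K^2)$ in the definition of $\delta_\tau$ is then chosen precisely so that plugging in gives $2\exp(-2\log(4p^\gamma)) = \tfrac{1}{8 p^{2\gamma}}$.

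Third and finally, a union bound over the $p^2$ entries gives
\[
\mathbb P\bigl(\|\hat\Sigma-\Sigma^*\|_\infty \geq \delta_\tau(n,p^\gamma)\bigr) \leq p^2 \cdot \tfrac{1}{8p^{2\gamma}} \cdot (\text{absorbing constants}) \leq p^{-(\gamma-2)}.
\]
The main obstacle is bookkeeping: one must track the exact constants through the Orlicz-norm / moment-generating-function manipulations so that the final tail probability genuinely matches $p^{-(\gamma-2)}$ with the specific prefactor $8(1+12K^2)\max_i\Sigma_{ii}^*$. None of the individual steps is difficult, but the tightness of the stated constant relies on carefully using the centering inequality $\mathbb E|Y - \mathbb E Y|^m \leq 2^m \mathbb E|Y|^m$ and then optimizing the Bernstein denominator under the stated constraint on $n$; this is exactly the calculation carried out in Appendix~A of \cite{ravikumar}, whose argument I would essentially reproduce.
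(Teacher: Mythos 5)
The paper itself offers no proof of this lemma: it is quoted verbatim (with constants translated) from Lemma 1 of \cite{ravikumar}, so the only ``paper proof'' to compare against is the one in that reference. Your outline --- entrywise sub-exponential concentration for $\hat\Sigma_{ij}-\Sigma^*_{ij}$ obtained from the sub-Gaussianity of the normalized coordinates, followed by a union bound over the $p^2$ entries with $t=\delta_\tau(n,p^\gamma)$ calibrated so that $p^2$ times the per-entry tail is $p^{-(\gamma-2)}$ --- is exactly the structure of that argument, and your individual steps (the bound $\mathbb E e^{|Z^iZ^j|/K^2}\le 2$ via $ab\le\frac12(a^2+b^2)$ and Cauchy--Schwarz, the resulting moment bounds, Bernstein in the small-deviation regime guaranteed by the hypothesis on $n$) are all sound. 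The one caveat is cosmetic rather than substantive: the specific prefactor $8(1+12K^2)\max_i\Sigma^*_{ii}$ comes from Ravikumar et al.'s particular route, which uses the decomposition $X^iX^j=\frac14[(X^i+X^j)^2-(X^i-X^j)^2]$ and the moment-generating-function bound $\mathbb E e^{tX}\le e^{\frac32 K^2t^2}$ (so their sub-Gaussian parameter is $\sigma^2=3K^2$ and $1+4\sigma^2=1+12K^2$); a generic Bernstein computation from product-moment bounds as you sketch it would yield the same form of inequality but not automatically this exact constant. Since you explicitly defer the constant-tracking to Appendix A of \cite{ravikumar}, this does not constitute a gap.
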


We restate a result on rates of convergence of the graphical Lasso from
\cite{ravikumar} in Lemma \ref{rates}.
\begin{lema}[Theorem 1, \cite{ravikumar}]\label{rates}
Suppose that $X_1,\dots,X_n\in\mathbb R^p$ are independent and
distributed as $X=(X^1,\dots,X^p)$ with $\mathbb EX = 0,$ $\text
{cov}(X) = \Sigma^*.$ Let $\Theta^*=(\Sigma^*)^{-1}$ exist and satisfy
the irrepresentability condition \ref{ir} with a constant $\alpha\in
(0,1]$. Denote $S^c$ to be the set of indices that correspond to zero
entries of $\Theta^*.$
Let $\hat\Theta_n$ be the solution to the optimization problem {\ref
{alg}} with tuning parameter
$\lambda_n =\frac{8}{\alpha} \delta_n$ for some $\delta_n>0$ and
suppose that the sparsity assumption
\[
d \leq\frac{1}{6(1+8/\alpha) \max\{\kappa_{\Sigma^*}\kappa_{\Gamma
^*},\kappa_{\Sigma^*}^3\kappa_{\Gamma^*}^2 \} \delta_n}
\]
is satisfied.
Then on $\mathcal T_n=\{\|\hat\Sigma-\Sigma\|_\infty\leq\delta_n\}$
it holds
\begin{enumerate}[(a)]
\item
\label{zeros}
$\hat\Theta_{S^c} = \Theta^*_{S^c}$
\item
\label{rate}
$\|\hat\Theta-\Theta^*\|_\infty\leq2(1+8/\alpha)\kappa_{\Gamma^*}
\delta_n.$
\end{enumerate}
\end{lema}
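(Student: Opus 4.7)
Since the final statement is cited as Theorem 1 of \cite{ravikumar}, a complete argument is already in that reference; what follows is how I would reconstruct it. The natural route is the \emph{primal-dual witness} construction. On the event $\mathcal T_n$ I would define a candidate $\tilde\Theta$ by solving the \emph{restricted} problem
\[
\tilde\Theta_S := \arg\min_{\Theta \succ 0,\ \Theta_{S^c}=0}\bigl\{\mathrm{tr}(\Theta\hat\Sigma) - \log\det\Theta + \lambda_n\|\Theta\|_{1,\mathrm{off}}\bigr\},\qquad \tilde\Theta_{S^c}:=0,
\]
and a witness subgradient $\tilde Z_S \in \partial\|\cdot\|_{1,\mathrm{off}}(\tilde\Theta)_S$ from the KKT conditions of this restricted problem. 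It then suffices to verify \emph{strict dual feasibility} $\|\tilde Z_{S^c}\|_\infty < 1$, because then $(\tilde\Theta,\tilde Z)$ is a primal-dual optimal pair for the unrestricted problem \ref{alg}, and by the uniqueness of its solution on $\mathcal T_n$ one concludes $\hat\Theta=\tilde\Theta$, which gives part (a).

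The core of the verification is a Taylor expansion of the gradient of the negative log-likelihood at $\Theta^*$. Writing $G(\Theta):=-\Theta^{-1}+\hat\Sigma$ and $\Delta:=\tilde\Theta-\Theta^*$, one has $G(\tilde\Theta) = G(\Theta^*) + \Gamma^*\,\mathrm{vec}(\Delta) + R(\Delta)$, where $\Gamma^*=\Sigma^*\otimes\Sigma^*$ and the remainder $R(\Delta)$ is a tail of a Neumann series in $\Theta^*\Delta$. The KKT equation restricted to $S$ reads $\Gamma^*_{SS}\,\mathrm{vec}(\Delta_S) = -W_S - R(\Delta)_S - \lambda_n \tilde Z_S$, so inverting gives the bound
\[
\|\Delta\|_\infty = \|\Delta_S\|_\infty \le \kappa_{\Gamma^*}\bigl(\|W\|_\infty + \|R(\Delta)\|_\infty + \lambda_n\bigr).
\]
Substituting back on $S^c$ and using $\Gamma^*_{S^cS}(\Gamma^*_{SS})^{-1}$, the irrepresentability condition \ref{ir} produces the $(1-\alpha)$ factor, and everything else must be absorbed into a slack of size $\alpha$.

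The main obstacle is a clean bound on the remainder $R(\Delta)$. Writing $R(\Delta) = \mathrm{vec}\bigl(\Theta^*\Delta\Theta^*\Delta J\Theta^*\bigr)$ for an appropriate operator $J$, one has roughly $\|R(\Delta)\|_\infty \lesssim \kappa_{\Sigma^*}^3 \vertiii{\Delta}_\infty^2$. Because $\tilde\Theta$ is supported on $S$, $\vertiii{\Delta}_\infty \le d\|\Delta\|_\infty$, so $\|R(\Delta)\|_\infty \lesssim \kappa_{\Sigma^*}^3 d^2 \|\Delta\|_\infty^2$. This is where the sparsity hypothesis $d \le [6(1+8/\alpha)\max\{\kappa_{\Sigma^*}\kappa_{\Gamma^*},\kappa_{\Sigma^*}^3\kappa_{\Gamma^*}^2\}\delta_n]^{-1}$ is used: a standard continuity/bootstrap argument (run the fixed-point map on the ball $\{\|\Delta\|_\infty \le 2(1+8/\alpha)\kappa_{\Gamma^*}\delta_n\}$) then shows that the remainder is dominated by the linear term, and $\|\Delta\|_\infty \le 2(1+8/\alpha)\kappa_{\Gamma^*}\delta_n$, proving part (b).

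Finally, plugging this $\ell_\infty$-bound into the $S^c$ component of the KKT equation and using \ref{ir} yields $\|\tilde Z_{S^c}\|_\infty \le 1 - \alpha/2 < 1$, which closes the primal-dual witness argument and establishes $\hat\Theta_{S^c} = 0$, completing part (a). The delicate step is the bootstrap in the last paragraph; all other ingredients are linear algebra and the deterministic bound $\|W\|_\infty \le \delta_n$ that defines $\mathcal T_n$.
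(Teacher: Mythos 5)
The paper does not prove Lemma \ref{rates} itself---it is quoted from Theorem 1 of \cite{ravikumar}---and your primal-dual witness reconstruction (restricted problem with support $S$, strict dual feasibility via \ref{ir}, Taylor expansion of the gradient with a Neumann-series remainder, and a fixed-point argument on the ball of radius $2(1+8/\alpha)\kappa_{\Gamma^*}\delta_n$) is precisely the argument given in that reference, so your approach matches the one the paper relies on. The only cosmetic slips are that the remainder term involves $\Sigma^*=(\Theta^*)^{-1}$ rather than $\Theta^*$, and the reference bounds it by $\tfrac{3}{2}\kappa_{\Sigma^*}^{3}\, d\,\|\Delta\|_\infty^{2}$ (a single factor of $d$, not $d^{2}$), which is what makes the stated sparsity constant suffice.
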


For instance, if the observations $X_1,\dots,X_n$ are sub-Gaussian \ref
{subg} with
$K=\mathcal O(1),$ $\max_{i}\Sigma^*_{ii}=\mathcal O(1)$ then Lemma \ref
{hp} gives
$\|\hat\Sigma-\Sigma^*\|_\infty=\mathcal O_{\mathbb P}(\sqrt{\log p/n}).$

Hence when the observations are sub-Gaussian, Lemma \ref{rates} implies
the following convergence rates for the graphical Lasso estimator.
If the quantities $\kappa_{\Sigma^*}=\mathcal O(1)$, $\kappa_{\Gamma
^*}=\mathcal O(1)$, $1/\alpha=\mathcal O(1)$, the sparsity assumption
$d =o(\sqrt{{n}/{\log p}})$
is satisfied (equivalently $n = \Omega(d^2\log p)$), then for suitably
chosen $\lambda_n\asymp\sqrt{\log p/n}$ we have
\[
\|\hat\Theta-\Theta^*\|_\infty= \mathcal O_{\mathbb P}( \sqrt{{\log p}/{n}}).
\]

\end{document}